\let\pa\partial
\let\na\nabla
\let\eps\varepsilon
\newcommand{\N}{{\mathbb N}}
\newcommand{\R}{{\mathbb R}}
\newcommand{\diver}{\operatorname{div}}
\newcommand{\dom}{\mathcal{D}}
\newtheorem{theorem}{Theorem}
\newtheorem{lemma}[theorem]{Lemma}
\begin{document}

\title[Cross-diffusion system for vesicle transport]{
Existence analysis of a cross-diffusion system \\
with nonlinear Robin boundary conditions \\
for vesicle transport in neurites}

\author[M. Fellner]{Markus Fellner}
\address{Institute of Analysis and Scientific Computing, Technische Universit\"at Wien,
Wiedner Hauptstra\ss e 8--10, 1040 Wien, Austria}
\email{markus.fellner@tuwien.ac.at}

\author[A. J\"ungel]{Ansgar J\"ungel}
\address{Institute of Analysis and Scientific Computing, Technische Universit\"at Wien,
Wiedner Hauptstra\ss e 8--10, 1040 Wien, Austria}
\email{juengel@tuwien.ac.at}

\date{\today}

\thanks{The authors acknowledge partial support from
the Austrian Science Fund (FWF), grants P33010 and F65.
This work has received funding from the European
Research Council (ERC) under the European Union's Horizon 2020 research and innovation programme, ERC Advanced Grant no.~101018153.}

\begin{abstract}
A one-dimensional cross-diffusion system modeling the transport of vesicles in neurites is analyzed. The equations are coupled via nonlinear Robin boundary conditions to ordinary differential equations for the number of vesicles in the reservoirs in the cell body and the growth cone at the end of the neurite. The existence of bounded weak solutions is proved by using the boundedness-by-entropy method. Numerical simulations show the dynamical behavior of the concentrations of anterograde and retrograde vesicles in the neurite. 
\end{abstract}

\keywords{Neurite growth, vesicle transport, cross-diffusion equations, nonlinear Robin boundary conditions, existence analysis.}

\subjclass[2000]{35A05, 35K51, 35K65, 35Q92, 92C20.}

\maketitle


\section{Introduction}

The aim of this paper is the analysis of cross-diffusion systems modeling the intracellular transport of vesicles in neurites. Compared to previous works like \cite{Jue15}, where no-flux boundary conditions are imposed, the novelties are the nonlinear Robin boundary conditions and the coupling to ordinary differential equations. 

Neurite growth is a fundamental process to generate axons and dendritic trees that connect to other neurons. During their development, neurites show periods of extension and rectraction until neuron polarity is established. Then one of the neurites becomes the axon, while the other neurites do not grow further. The process of elongation and retraction depends, besides many other mechanisms \cite{OlGo22}, on the motor-driven transport of vesicles inside the neurites. Vesicles are biological structures consisting of liquid or cytoplasm and are enclosed by a lipid membrane. They are produced in the cell body (soma) and transport material to the tip of a neurite (the so-called growth cone). Vesicles that fuse with the plasma membrane of the growth cone deliver their membrane lipids to the tip, causing the neurite shaft to grow. Vesicles moving to the growth cone are called anterograde vesicles. Retrograde vesicles are generated via endocytosis at the growth cone plasma membrane and move back in the direction of the soma.

We model anterograde and retrograde vesicles as two different particle species as in \cite{HDPP21}. Because of the finite size of the vesicles, we take into account size exclusion effects. In the diffusion limit of a deterministic lattice model, the authors of \cite{HDPP21} derived formally mass balance equations with fluxes that depend on the gradients of both the concentrations of the anterograde and retrograde vesicles, leading to cross-diffusion equations. The dynamics of the vesicle concentrations in the neurite pools at the soma and growth cone are governed by ordinary differential equations, which are linked to the cross-diffusion equations through nonlinear Robin boundary conditions. 

The dynamics of the concentrations (or volume fractions) of the anterograde vesicles $u_1(x,t)$ and the retrograde vesicles $u_2(x,t)$ along the one-dimensional neurite is governed by 
\begin{align}
  & \pa_t u_1 + \pa_x J_1 = 0, \quad
  J_1 = -D_1\big(u_0\pa_x u_1 - u_1\pa_x u_0 - u_0u_1\pa_x V_1\big), 
  \label{1.eq1} \\
  & \pa_t u_2 + \pa_x J_2 = 0, \quad
  J_2 = -D_2\big(u_0\pa_x u_2 - u_2\pa_x u_0 - u_0u_2\pa_x V_2\big),
  \label{1.eq2}
\end{align}
solved in the bounded interval $\Omega=(0,1)$ with the soma at $x=0$ and the growth cone at $x=1$ for times $t>0$, supplemented with the initial conditions
\begin{equation}\label{1.ic}
  u_1(\cdot,0) = u_1^0, \quad u_2(\cdot,0) = u_2^0\quad
  \mbox{in }\Omega.
\end{equation}
Here, $u_0=1-u_1-u_2$ describes the void volume fraction, $J_i$ are the corresponding fluxes, $D_i$ the diffusion coefficients, and $V_i$ given potentials. Equations \eqref{1.eq1}--\eqref{1.eq2} form a cross-diffusion system with a nonsymmetric and generally not positive definite diffusion matrix, given by
\begin{equation}\label{1.A}
  A(u) = \begin{pmatrix}
  D_1(1-u_2) & D_1 u_1 \\ D_2 u_2 & D_2(1-u_1)
  \end{pmatrix}.
\end{equation}
Moreover, if $u_0=0$, the equations are of degenerate type; see \eqref{2.dei2}. 

Let $\Lambda_n(t)/\Lambda_n^{\rm max}$ and $\Lambda_s(t)/\Lambda_s^{\rm max}$ be the percentage of currently occupied space in the soma and the growth cone, respectively. Anterograde vesicles leave the soma and enter the neurite at $x=0$ if there is enough space with rate $\alpha_1 (\Lambda_s/\Lambda_s^{\rm max})u_0(0,\cdot)$, and they enter the growth cone with rate $\beta_1(1-\Lambda_n/\Lambda_n^{\rm max}) u_0(1,\cdot)u_1(\cdot,1)$. Retrograde vesicles enter the soma with rate $\beta_1(1-\Lambda_s/\Lambda_s^{\rm max})u_0(1,\cdot)u_2(\cdot,0)$ and 
leave the growth cone with rate $\alpha_2(\Lambda_n/\Lambda_n^{\rm max})u_0(1,\cdot)$, where $\alpha_i,\beta_i>0$ for $i=1,2$ are some constants. Thus, the fluxes at $x=0$ and $x=1$ are given by the nonlinear Robin boundary conditions
\begin{align}
  J_1(0,t) &= J_1^0[u](t) := \alpha_1\frac{\Lambda_s(t)}{\Lambda_s^{\rm max}} u_0(0,t), \label{1.bc10} \\
  J_1(1,t) &= J_1^1[u](t) := \beta_1\bigg(1-\frac{\Lambda_n(t)}{\Lambda_n^{\rm max}}\bigg)u_0(1,t)u_1(1,t), \label{1.bc11} \\
  J_2(0,t) &= J_2^0[u](t) := -\beta_2\bigg(1-\frac{\Lambda_s(t)}{\Lambda_s^{\rm max}}\bigg)u_0(0,t)u_2(0,t), 
  \label{1.bc20} \\
  J_2(1,t) &= J_2^1[u](t) := -\alpha_2\frac{\Lambda_n(t)}{\Lambda_n^{\rm max}}u_0(1,t) 
  \quad\mbox{for }t>0,\label{1.bc21}
\end{align}
where $u=(u_1,u_2)$. Compared to \cite{HDPP21}, the boundary conditions \eqref{1.bc11} and \eqref{1.bc20} depend on $u_0$ to account for the resistance of entering the growth cone and soma, respectively, for instance due to viscosity. There is also a mathematical reason for this choice, which is explained below. 

Finally, the change of vesicle numbers in the soma and growth cone is determined by the corresponding in- and outflow fluxes,
\begin{align}\label{1.ode1}
  \pa_t\Lambda_n = J_1^1[u]+J_2^1[u], \quad t>0, \quad
  \Lambda_n(0) = \Lambda_n^0,\\
  \pa_t\Lambda_s = -(J_1^0[u]+J_2^0[u])\quad t>0, \quad
  \Lambda_s(0) = \Lambda_2^0. \label{1.ode2}
\end{align}
Inserting \eqref{1.bc10}--\eqref{1.bc21} into these equations, they become linear ordinary differential equations in $\Lambda_n$ and $\Lambda_n$, coupled to equations \eqref{1.eq1}--\eqref{1.eq2}.

Model \eqref{1.eq1}--\eqref{1.bc21} can be derived in the diffusion limit from the lattice model of \cite{HDPP21}; see Section \ref{sec.deriv}. A Fokker--Planck equation for single-species vesicles with in- and outflow boundary conditions was analyzed in \cite{BHP20}. The work \cite{BrKa16} models a limited transport capacity inside the neurites by taking into account size exclusion effects for a single motor-cargo complex with and without vesicles. Advection-diffusion equations for the bidirectional vesicular transport were derived in \cite{BrLe15}. Dynamically varying neurite lengths are allowed in \cite{MPW23}, leading to drift-diffusion-reaction equations. A lattice model for the probability that a receptor traveling with a vesicle is located at a given cell was analyzed in \cite{Bre06}. This model was generalized in \cite{Bre16} by allowing motor-complexes to carry an arbitrary number of vesicles, which leads to Becker--D\"oring equations for aggregation-fragmentation processes. The size of the cargo vesicles, which strongly influences the speed of retrograde transport, was taken into account in \cite{RaOe23}, and a free-boundary problem for the radius of the vesicle has been formulated. We also mention the paper \cite{BaEh18} for a related cross-diffusion system with free boundary and nonvanishing flux boundary conditions. 

The goal of this paper is to analyze model \eqref{1.eq1}--\eqref{1.bc21} mathematically. Equations \eqref{1.eq1}--\eqref{1.eq2} are similar to the ion-transport model in \cite{GeJu18}. The analysis of this system was based on the boundedness-by-entropy method \cite{BDPS10,Jue15} and a version of the Aubin--Lions compactness lemma which takes into account the degeneracy at $u_0=0$ \cite{ZaJu17}. The main difficulty here is the treatment of the nonlinear Robin boundary conditions. Linear Robin boundary conditions were considered in \cite{BuPi16} but for stationary drift-diffusion equation for one species only.

The key idea of our analysis is to work with the entropy (or, more precisely, free energy)
\begin{align}
  & E(u) = \int_\Omega(h(u)-u_1V_1-u_2V_2)dx, \quad\mbox{where} 
  \nonumber \\
  & h(u) = \sum_{i=1}^2 u_i(\log u_i-1) + u_0(\log u_0-1)
  \quad\mbox{and}\quad u_0=1-u_1-u_2. \label{1.h}
\end{align}
Introducing the electrochemical potentials
$\mu_i=\delta E/\delta u_i=\log(u_i/u_0)-V_i$ for $i=1,2$, system \eqref{1.eq1}-\eqref{1.eq2} can be written as a formal gradient flow in the sense
$$
  \pa_t u_i = \diver\sum_{j=1}^2 B_{ij}\na\mu_j, \quad\mbox{where }
  B_{ij}=D_iu_0u_i\delta_{ij}, \ i=1,2,
$$
and $\delta_{ij}$ is the Kronecker symbol. The advantage of this formulation is that the drift terms are eliminated and that the new diffusion matrix $(B_{ij})$ is (diagonal and) positive definite. This formulation is the basis of the boundedness-by-entropy method \cite[Chap.~4]{Jue16}.
The use of the electrochemical potentials has another benefit. Inverting the relation $(u_1,u_2)\mapsto(\mu_1,\mu_2)$, we infer from
$$
  u_i = \frac{\exp(\mu_i+V_i)}{1+\exp(\mu_1+V_1)+\exp(\mu_2+V_2)}, \quad i=1,2,
$$
that
\begin{equation}\label{1.dom}
  u=(u_1,u_2)\in \dom := \big\{u\in\R^2:u_1>0,\,u_2>0,\, u_1+u_2<1\big\},
\end{equation}
guaranteeing the physical bounds without the use of a maximum principle. 

Furthermore, a formal computation (see the proof of \eqref{2.dei}) 
shows that
\begin{align}\label{1.ei}
  \frac{dE}{dt}(u) + \int_\Omega\sum_{i=1}^2 D_i u_0u_i\bigg|\na
  \bigg(\log\frac{u_i}{u_0}-V_i\bigg)\bigg|^2 dx
  = -\sum_{i=1}^2\bigg[J_i\cdot\nu\bigg(\log\frac{u_i}{u_0}-V_i\bigg)
  \bigg]_{x=0}^{x=1},
\end{align}
where $\nu(0)=-1$ and $\nu(1)=1$. The most delicate terms are
$J_1\cdot\nu(\log(u_1/u_0)-V_1)|_{x=0}$ and $J_2\cdot\nu(\log(u_2/u_0)-V_2)|_{x=1}$. To estimate these expressions, we exploit the fact that both terms factorize $u_0$. For instance,
$$
  -J_{1}\cdot\nu\bigg(\log\frac{u_i}{u_0}-V_i\bigg)\bigg|_{x=0}
  = \alpha_1\frac{\Lambda_s}{\Lambda_s^{\max}}u_0
  (\log u_1 - \log u_0 - V_1)|_{x=0} 
$$
is bounded from above since $\Lambda_{s}\ge 0$, $-u_0\log u_0$ is bounded, and $u_0\log u_1$ is nonpositive due to $0<u_1<1$. Similarly, the other boundary terms are bounded, and we conclude that the right-hand side of \eqref{1.ei} is bounded from above. An estimation of the entropy production term (the second term on the left-hand side of \eqref{1.ei}) shows that
(see, e.g., the proof of Lemma 6 in \cite{GeJu18})
\begin{align}\label{1.ep}
  \int_\Omega&\sum_{i=1}^2 D_i u_0u_i\bigg|\na
    \bigg(\log\frac{u_i}{u_0}-V_i\bigg)\bigg|^2 dx \\
    &\ge c\int_\Omega\bigg(\sum_{i=1}^2 u_0|\na\sqrt{u_i}|^2 
    + |\na\sqrt{u_0}|^2\bigg)dx - C\int_\Omega\sum_{i=1}^2|\na V_i|^2 dx.
    \nonumber
\end{align}
Together with the $L^\infty(\Omega)$ bounds for $u_i$, this provides $H^1(\Omega)$ bounds for $u_0$ and $u_0u_i$ for $i=1,2$, which are needed to apply the ``degenerate'' Aubin--Lions lemma \cite{Jue15}. Moreover, the bounds show that we can define the traces of $u_0u_i$ and $u_0$, which is needed to give a meaning to the boundary conditions \eqref{1.bc10}--\eqref{1.bc21}. At this point, we need the factor $u_0u_i$ in \eqref{1.bc11} and \eqref{1.bc20}. Indeed, without the factor $u_0$, we are not able to define $u_1$ and $u_2$ at $x=0,1$.
This is the mathematical reason to introduce this factor. 

We note that our method also works for more than two species and in several space dimensions. Thanks to the $L^\infty(\Omega)$ bounds, no restriction on the space dimension due to Sobolev embeddings is needed. For more than two species, one may apply the techniques elaborated in \cite{GeJu18}.

For our main result, we impose the following assumptions:
\begin{itemize}
\item[(A1)] Domain: $\Omega=(0,1)$, $T>0$, $\Omega_T:=\Omega\times(0,T)$.
\item[(A2)] Parameter: $\alpha_i$, $\beta_i$, $D_i>0$, $V_i\in H^1(\Omega)$ for $i=1,2$ and $\Lambda_n^{\rm max}$, $\Lambda_s^{\rm \max}>0$.
\item[(A2)] Initial data: $u_1^0,u_2^0\in L^1(\Omega)$ satisfies $(u_1^0,u_2^0)(x)\in\dom$ for a.e.\ $x\in\Omega$ (see Definition \ref{1.dom} of $\dom$) and $\Lambda_n^0/\Lambda_n^{\max}$, $\Lambda_s^0/\Lambda_s^{\max}\in[0,1]$.
\end{itemize}

\begin{theorem}[Global existence]\label{thm.ex}
Let Assumptions (A1)--(A3) hold. Then there exists a weak solution
$(u_1,u_2,\Lambda_n,\Lambda_s)$ to \eqref{1.eq1}--\eqref{1.ode2} satisfying $u_1,u_2\ge 0$ and $u_1+u_2\le 1$ in $\Omega_T$,
$$
  \sqrt{u_0}u_i,\,\sqrt{u_0}\in L^2(0,T;H^1(\Omega)), \quad
  \pa_t u_i\in L^2(0,T;H^1(\Omega)'), \quad i=1,2,
$$ 
the weak formulation
$$
  \int_0^T\langle\pa_t u_i,\phi_i\rangle dt
  - \int_0^T\int_\Omega J_i\pa_x\phi_i dxdt
  + \int_0^T\big[J_i(x,t)\phi_i(x,t)\big]_{x=0}^{x=1}dt = 0
$$
where $\langle\cdot,\cdot\rangle$ is the dual product between $H^1(\Omega)'$ and $H^1(\Omega)$, the fluxes are defined as
$$
  J_i = \sqrt{u_0}\pa_x(\sqrt{u_0}u_i) - 3\sqrt{u_0}u_i\pa_x\sqrt{u_0}
  - u_0u_i\pa_x V_i\in L^2(\Omega_T), \quad i=1,2,
$$ 
the initial conditions \eqref{1.ic} are satisfied in the sense of $H^1(\Omega)'$, and equations \eqref{1.ode1}--\eqref{1.ode2} are fulfilled in the sense of $L^2(\pa\Omega)$.
\end{theorem}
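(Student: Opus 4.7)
The plan is to adapt the boundedness-by-entropy method of \cite{Jue15,Jue16} to the present coupled PDE--ODE system with nonlinear Robin data. First, I would pass to the entropy variables $\mu_i=\delta E/\delta u_i=\log(u_i/u_0)-V_i$. Since $\mu\mapsto u(\mu)$ maps $\R^2$ smoothly and bijectively onto $\dom$, any solution expressed in $\mu$ automatically satisfies the physical bounds $u_1,u_2>0$ and $u_1+u_2<1$ without recourse to a maximum principle. In these variables the equations take the diagonal gradient-flow form $\pa_t u_i=\pa_x(D_i u_0 u_i\pa_x\mu_i)$, and the diffusion matrix becomes positive definite. The Robin boundary data, when read against test functions, carry the crucial factor $u_0$ that will later make the boundary terms sensible.

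Second, I would build an approximate problem by combining implicit-Euler time discretization of step $\tau=T/N$ with an elliptic $H^1$-regularization of parameter $\eps$. Given $(u^{k-1},\Lambda_n^{k-1},\Lambda_s^{k-1})$, one seeks $\mu^k\in H^1(\Omega)^2$ satisfying the variational problem obtained by testing the $\mu$-form of \eqref{1.eq1}--\eqref{1.eq2} against $\phi\in H^1(\Omega)^2$, incorporating the Robin fluxes as boundary integrals and adding a coercive term $\eps(\pa_x\mu_i^k\pa_x\phi_i+\mu_i^k\phi_i)$. The ODE block is updated by implicit Euler and, given $\mu^k$, reduces to a $2\times 2$ linear system in $(\Lambda_n^k,\Lambda_s^k)$. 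Existence of $\mu^k$ is obtained by a Leray--Schauder argument: the $L^\infty$ bound $u(\mu)\in\dom$ tames the nonlinearity, while the $\eps$-term supplies uniform coercivity in $H^1$. Testing the discrete equation with $\mu_i^k$ and using the convexity inequality $h(u^k)-h(u^{k-1})\le\sum_i\mu_i^k(u_i^k-u_i^{k-1})+\text{(potentials)}$ yields the discrete entropy inequality
\begin{align*}
E(u^k)+\tau\sum_i\int_\Omega D_i u_0^k u_i^k|\pa_x\mu_i^k|^2 dx+\tau\eps\|\mu^k\|_{H^1}^2+\tau\sum_i\text{BT}_i^k\le E(u^{k-1}),
\end{align*}
where each boundary contribution $\text{BT}_i^k$ is bounded from below exactly as sketched after \eqref{1.ei}, since the factorization by $u_0$ turns $u_0\log u_i$ into a nonpositive quantity and $-u_0\log u_0$ into a bounded one. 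Summing over $k$ and using \eqref{1.ep} produces uniform bounds on $\sqrt{u_0},\sqrt{u_0}u_i$ in $L^2(0,T;H^1(\Omega))$, on $\sqrt{\eps}\,\mu_i$ in $L^2(0,T;H^1(\Omega))$, and on the discrete time derivative $\pa_t^\tau u_i$ in $L^2(0,T;H^1(\Omega)')$ by duality.

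Third, I would pass to the limit $(\eps,\tau)\to 0$. Strong $L^2(\Omega_T)$ convergence of $u_i^\tau$, $\sqrt{u_0^\tau}u_i^\tau$ and $\sqrt{u_0^\tau}$ is delivered by the degenerate Aubin--Lions lemma of \cite{Jue15,ZaJu17}. Rewriting the flux as $J_i=\sqrt{u_0}\pa_x(\sqrt{u_0}u_i)-3\sqrt{u_0}u_i\pa_x\sqrt{u_0}-u_0u_i\pa_x V_i$ (an identity that one verifies directly from the chain rule), the interior term passes to the limit by combining these strong convergences with the weak $L^2$ convergence of the gradients. The linear ODEs for $\Lambda_n,\Lambda_s$ converge by Arzel\`a--Ascoli once the boundary traces of $u_0$ and $u_0u_i$ converge.

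The main obstacle is the nonlinear Robin boundary data. The uniform $L^2(0,T;H^1(\Omega))$ bound on $\sqrt{u_0}u_i$ and $\sqrt{u_0}$ gives only weak trace convergence at $x=0,1$, whereas the boundary fluxes \eqref{1.bc10}--\eqref{1.bc21} contain the products $u_0(\cdot,t)u_i(\cdot,t)$ together with $\Lambda_n,\Lambda_s$. To handle this I would interpolate the interior strong $L^2$ convergence with the $H^1$ bound to obtain convergence in $L^2(0,T;H^s(\Omega))$ for some $s>1/2$, and then invoke the continuity of the trace operator $H^s(\Omega)\hookrightarrow C(\overline\Omega)$ together with the time-equicontinuity derived from the $H^1(\Omega)'$-bound on $\pa_t u_i$ to secure strong trace convergence in $L^2(0,T)$ of both $u_0$ and $u_0u_i$. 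This is precisely where the design of the boundary conditions pays off: the factor $u_0$ in \eqref{1.bc11} and \eqref{1.bc20} makes the relevant traces those of the $H^1$-controlled quantities $u_0$ and $u_0u_i$, so that the nonlinear boundary terms are well defined and continuous under the limits, even though $u_1$ and $u_2$ themselves possess no such trace in general.
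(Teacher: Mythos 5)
Your proposal follows essentially the same route as the paper: entropy variables, an implicit Euler time discretization combined with an $\eps$-regularization in $H^1$, Leray--Schauder for the approximate problem, a discrete entropy inequality whose boundary terms are controlled precisely because the Robin data factorize $u_0$, the degenerate Aubin--Lions lemma, and the rewriting of the flux in terms of $\sqrt{u_0}$ and $\sqrt{u_0}u_i$. The only substantive divergence is the treatment of the boundary traces: you interpolate the strong $L^2(\Omega_T)$ convergence of $u_0$ and $u_0u_i$ against their $L^2(0,T;H^1(\Omega))$ bounds to land in $L^2(0,T;H^s(\Omega))$ with $s>1/2$, hence obtain \emph{strong} $L^2(0,T)$ convergence of the traces, whereas the paper settles for \emph{weak} $L^2(0,T)$ trace convergence (weak continuity of the linear trace operator $H^1(\Omega)\to L^2(\pa\Omega)$), which suffices because the boundary fluxes are linear in the $H^1$-controlled quantities $u_0$ and $u_0u_i$ once $\Lambda_n^{(\tau)}$, $\Lambda_s^{(\tau)}$ are shown to converge uniformly via Arzel\`a--Ascoli. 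Your interpolation argument is correct (the time-equicontinuity from the $\pa_t u_i$ bound is not actually needed for it) and slightly stronger than required here; it would become essential if the boundary data were genuinely nonlinear in the traces rather than linear in $u_0$ and $u_0u_i$. The remaining differences are cosmetic: the paper works with the chemical potentials $w_i=\log(u_i/u_0)$ rather than $\mu_i$ (and remarks that both work), and it solves the $\Lambda$-equations continuously in time with piecewise constant coefficients via Banach's fixed point (Lemma \ref{lem.ode}) rather than by implicit Euler; in either variant one must verify $0\le\Lambda_n,\Lambda_s\le 1$, which your lower bound on the boundary entropy terms silently uses.
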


As mentioned above, the regularity of $u_0$ and $u_i$ for $i=1,2$ allows us to define the trace of $u_0$ and $u_0u_i$ such that the boundary conditions and the differential equations for $\Lambda_n$ and $\Lambda_s$ are well defined. The proof of Theorem \ref{thm.ex} is based on the entropy identity \eqref{1.ei}, a regularization of equations \eqref{1.eq1}--\eqref{1.eq2}, the Leray--Schauder fixed-point theorem, and a compactness argument using uniform gradient estimates coming from \eqref{1.ep}. 

The paper is organized as follows. We sketch the formal derivation of \eqref{1.eq1}--\eqref{1.bc21} from a lattice model in Section \ref{sec.deriv}. Theorem \ref{thm.ex} is proved in Section \ref{sec.ex}. We present in Section \ref{sec.num} some numerical experiments and properties of stationary solutions. 


\section{Formal derivation of the model}\label{sec.deriv}

Equations \eqref{1.eq1}-\eqref{1.bc21} can be formally derived from discrete dynamics on a lattice, which takes into account the in- and outflow of vesicles into the respective lattice cell. The derivation is similar to the presentation in \cite{HDPP21}; we repeat it for the convenience of the reader and to highlight the main difference to \cite{HDPP21}. We divide the domain $\Omega=(0,1)$ into $m$ cells $K_j$ of length $h>0$ and midpoint $x_{j}=hj$, where $j=0,\ldots,m-1$. The cell $K_j$ is occupied by anterograde vesicles with volume fraction $u_{1,j}(t)=u_{1}(x_{j},t)$ and retrograde vesicles with volume fraction $u_{2,j}(t)=u_2(x_{j},t)$. 

The transition rate of a vesicle to jump from cell $j$ to a neighboring cell $j\pm 1$ equals 
$$
  u_{i,j}u_{0,j\pm 1}\exp[-\eta_i(V_i(x_{j\pm 1})+V_i(x_j))], \quad i=1,2,
$$
where $\eta_i>0$ is some constant and $V_{i,j}=V_i(x_j,\cdot)$, taking into account that a jump is possible only if the cell $j$ is not empty ($u_{i,j}>0$) and the cell $j\pm 1$ is not fully occupied ($u_{0,j\pm 1}>0$). The dynamics of $u_{i,j}$ is then given by
\begin{align}\label{aux}
  \gamma_i h^2\pa_t u_{i,j}
  &= -u_{i,j}u_{0,j-1}e^{-\eta_i(V_{i,j}-V_{i,j-1})}
  + u_{i,j-1}u_{0,j}e^{-\eta_i(V_{i,j-1}-V_{i,j})} \\
  &\phantom{xx}- u_{i,j}u_{0,j+1}e^{-\eta_i(V_{i,j}-V_{i,j+1})}
  + u_{i,j+1}u_{0,j}e^{-\eta_i(V_{i,j+1}-V_{i,j})}, \nonumber
\end{align}
where $\gamma_i>0$. The factor $h^2$ on the left-hand side corresponds to a diffusion scaling. By Taylor expansion, we have $e^{-\eta_i z}=1-\eta_i z + \eta_i^2 z^2/2 + O(z^3)$ and $V_{i,j}-V_{i,j-1}=h\pa_x V_{i,j-1/2}+O(h^3)$, where $V_{i,j\pm 1/2}=V_i((j\pm 1/2)h,\cdot)$. Then
\begin{align*}
  e^{-\eta_i(V_{i,j}-V_{i,j-1})} &= 1 - \eta_i h\pa_x V_{i,j-1/2}
  + \eta_i^2h^2(\pa_x V_{i,j-1/2})^2 + O(h^3), \\
  e^{-\eta_i(V_{i,j-1}-V_{i,j})} &= 1 + \eta_i h\pa_x V_{i,j-1/2}
    + \eta_i^2h^2(\pa_x V_{i,j-1/2})^2 + O(h^3).
\end{align*}
In a similar way, we expand $u_{i,j\pm 1}=u_{i,j} \pm h\pa_x u_{i,j} 
+ (h^2/2)\pa_x^2 u_{i,j} + O(h^3)$. Inserting these expansions into \eqref{aux}, we find after a computation that
\begin{align*}
  \gamma_i h^2\pa_t u_{i,j} 
  &= (u_{0,j}\pa_x^2 u_{i,j}-u_{i,j}\pa_x^2 u_{0,j})h^2
  - 2\eta_i u_{i,j}u_{0,j} (\pa_x V_{i,j+1/2}-\pa_x V_{i,j-1/2})h \\
  &\phantom{xx}- \eta_i(u_{0,j}\pa_x u_{i,j} + u_{i,j}\pa_x u_{0,j})
  (\pa_x V_{i,j+1/2}+\pa_x V_{i,j-1/2})h^2 
  + O(h^3) \\
  &= (u_{0,j}\pa_x^2 u_{i,j}-u_{i,j}\pa_x^2 u_{0,j})h^2
  -2\eta_i u_{i,j}u_{0,j}\pa_x^2 V_{i,j}h^2 \\
  &\phantom{xx}- 2\eta_i(u_{0,j}\pa_x u_{i,j} 
  + u_{i,j}\pa_x u_{0,j})\pa_x V_{i,j}h^2 + O(h^3),
\end{align*}
where we expanded $h\pa_x V_{i,j\pm 1/2}= h\pa_x V_{i,j}\pm (h^2/2)\pa_x^2 V_{i,j}+O(h^3)$. We divide this equation by $h^2$, and pass to the formal limit $h\to 0$:
\begin{align*}
  \gamma_i\pa_t u_{i} &= (u_0\pa_x^2 u_i - u_i\pa_x^2 u_0) 
  - 2\eta_i u_iu_0\pa_x^2V_i
  - 2\eta_i(u_0\pa_x u_i+u_i\pa_x u_0)\pa_x V \\
  &= \pa_x\big(u_0\pa_x u_i - u_i\pa_x u_0 - 2\eta_iu_0u_i\pa_x V).
\end{align*}
Setting $\eta_i=1/2$ and $D_i=1/\gamma_i$, we obtain \eqref{1.eq1}--\eqref{1.eq2}.

At the points $x=0$ and $x=1$, there are reservoirs with concentrations $\Lambda_s$ at $x=0$ and $\Lambda_n$ at $x=1$. The in- and outflow rates are given by
$$
  A_i(\Lambda_{\ell}) = a_i u_{0,0}
  \frac{\Lambda_{\ell}}{\Lambda_{\ell}^{\rm max}},
  \quad B_i(\Lambda_{\ell}) = b_i u_{0,m}
  \bigg(1-\frac{\Lambda_{\ell}}{\Lambda_{\ell}^{\rm max}}\bigg),
  \quad \ell=n,s,
$$
where $a_i,b_i>0$. 
We have multiplied these rates by the factor $u_{0,j}$ with $j=0$ and $j=m$, respectively, which models the resistance of entering the first and last cell. This is the main difference to the derivation in \cite{HDPP21}. Taken into account the inflow and outflow of vesicles at $x=0$, the change of the fraction of the anterograde vesicles is given by
\begin{align*}
  h^2\pa_t u_{1,0} &= -u_{1,0}(t)u_{0,1}(t)
  e^{-\eta_{1}(V_1(x_1)-V_1(x_0))} \\
  &\phantom{xx}+ u_{1,1}(t)u_{0,0}(t)e^{-\eta_{1}(V_1(x_0)-V_1(x_1))}
  + a_1\frac{\Lambda_s(t)}{\Lambda_s^{\rm max}}u_{0,0}(t)h,
\end{align*}
An expansion similarly as before, up to $O(h^2)$ instead of $O(h^3)$, leads to
\begin{align*}
  h^2\pa_t u_{1,0} 
  &= -u_{1,0}(u_{0,0} + h\pa_x u_{0,0})(1+\eta_1 h\pa_x V_{1,0}) \\
  &\phantom{xx}
  + (u_{1,0}+h\pa_x u_{1,0})u_{0,0}(1-\eta_1 h\pa_x V_{1,0}) + O(h^2) \\
  &= h(u_{0,0}\pa_x u_{1,0}
  + u_{1,0}\pa_x u_{0,0}) - 2\eta_i h u_{0,0}u_{1,0}\pa_x V_{1,0}
  + a_i h\frac{\Lambda_s}{\Lambda_s^{\rm max}} u_{0,0} + O(h^2).
\end{align*}
We divide the previous equation by $h$ and perform the limit $h\to 0$:
$$
  0 = (u_{0,0}\pa_x u_{1,0} + u_{1,0}\pa_x u_{0,0})
  - 2\eta_i u_{0,0}u_{1,0}\pa_x V_{1,0} 
  + a_i\frac{\Lambda_s}{\Lambda_s^{\rm max}} u_{0,0}.
$$
We set $\eta_i=1/2$ and $\alpha_i=a_iD_i$ and multiply the equation by $D_i$:
$$
  J_1(0,\cdot) = -D_i\big((u_{0,0}\pa_x u_{1,0} + u_{1,0}\pa_x u_{0,0}
  - u_{0,0}u_{1,0}\pa_x V_{1,0}\big) 
  = \alpha_i\frac{\Lambda_s}{\Lambda_s^{\rm max}} u_{0}(0,\cdot),
$$
which equals \eqref{1.bc10}. The boundary conditions \eqref{1.bc11}--\eqref{1.bc21} are shown in a similar way.


\section{Proof of Theorem \ref{thm.ex}}\label{sec.ex}

After proving some auxiliary lemmas, we regularize system \eqref{1.eq1}--\eqref{1.eq2} in time and space and prove the existence of a solution to this approximate problem by using the Leray--Schauder fixed-point theorem. The compactness of the fixed-point operator follows from the discrete entropy inequality analogous to \eqref{1.ei}. This inequality also provides a priori estimates uniform in the approximation parameters. The relative compactness of the sequence of approximate solutions then follows from a ``degenerate'' Aubin--Lions-type result. Finally, we verify that the limit function is a solution to \eqref{1.eq2}--\eqref{1.bc21}. To simplify the notation, we set $\Lambda_n^{\rm max}=1$ and $\Lambda_s^{\rm max}=1$ in the analysis. 

\subsection{Auxiliary lemmas}

The following lemma follows from a straightforward computation
(also see \cite[(4.61)]{Jue16}). 

\begin{lemma}\label{lem.h2A}
Let $h(u)$ be given by \eqref{1.h} and let 
$A=(A_{ij}(u))\in\R^{2\times 2}$ be defined by \eqref{1.A}. Then, for any $u\in\dom$ and $z\in\R^2$,
\begin{align*}
  z\cdot h''(u)A(u)z &= \min\{D_1,D_2\}u_0\bigg(\frac{z_1^2}{u_1}+\frac{z_2^2}{u_2}
  \bigg) + \min\{D_1,D_2\}\bigg(\frac{1}{u_0}+1\bigg)(z_1+z_2)^2 \\
  &\phantom{xx}
  + |D_2-D_1|\frac{u_2}{u_0}\bigg|z_1 - \frac{1-u_2}{u_2}z_2\bigg|^2.
\end{align*}
\end{lemma}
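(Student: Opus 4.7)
The plan is to prove Lemma \ref{lem.h2A} by a direct computation, following the pattern of \cite[(4.61)]{Jue16}. First I would compute the Hessian $h''(u)$: from $\partial_{u_i} h = \log u_i - \log u_0$ (using $\partial_{u_i} u_0 = -1$), one obtains the symmetric matrix with entries $h''_{ii}(u) = 1/u_i + 1/u_0$ and $h''_{12}(u) = h''_{21}(u) = 1/u_0$. Then I would carry out the $2\times 2$ matrix multiplication $h''(u)A(u)$ entrywise and collect the quadratic form $Q(z) := z \cdot h''(u)A(u) z$ coefficient by coefficient in $z_1^2$, $z_2^2$, and $z_1 z_2$ (keeping in mind that only the symmetric part of $h''(u)A(u)$ contributes, so the cross term picks up $(h''A)_{12}+(h''A)_{21}$).

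The key algebraic device is the identity $1-u_i = u_0 + u_j$ for $\{i,j\}=\{1,2\}$, which repeatedly simplifies the off-diagonal entries of $A(u)$ against factors of $1/u_0$. To extract the three-term structure of the claim, I would split $A(u)$ additively as
\[
  A(u) = \min\{D_1,D_2\}\begin{pmatrix} 1-u_2 & u_1 \\ u_2 & 1-u_1 \end{pmatrix} + |D_2-D_1|\, R(u),
\]
where $R(u)$ contains only one nontrivial row depending on the sign of $D_2-D_1$ (the second row if $D_1\le D_2$, the first row if $D_2<D_1$). Inserting this split into $Q(z)$, the $\min\{D_1,D_2\}$-contribution reduces, after using $1-u_i = u_0 + u_j$, to
\[
  \min\{D_1,D_2\}\Big[u_0\Big(\tfrac{z_1^2}{u_1}+\tfrac{z_2^2}{u_2}\Big) + \Big(\tfrac{1}{u_0}+1\Big)(z_1+z_2)^2\Big],
\]
which matches the first two terms of the stated identity.

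The third term comes from the rank-one residual $|D_2-D_1|R(u)$: after multiplication by $h''(u)$ and symmetrization, the $z_1^2$, $z_2^2$, and $z_1z_2$ coefficients share a common factor and can be combined, via completion of the square, into a single term of the form $|D_2-D_1|\frac{u_2}{u_0}\big|z_1-\frac{1-u_2}{u_2}z_2\big|^2$ (up to sign conventions and the $u_1\leftrightarrow u_2$ swap in the case $D_2<D_1$). The fact that this residual really is a perfect square is exactly the nonnegativity that makes $h$ an entropy for $A$.

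I expect the main obstacle to be purely bookkeeping: the eight entries of $h''(u)A(u)$ contain up to four summands each, and combining them requires repeated use of $u_0+u_1+u_2=1$. The only non-routine step is the collapse of the asymmetric residual into a single square; I would handle this by verifying that the coefficients of $z_1^2$, $z_2^2$, and $z_1z_2$ satisfy the discriminant identity characterizing a perfect square, rather than trying to guess the square ansatz a priori.
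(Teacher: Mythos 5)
Your strategy---compute $h''(u)$, split $A(u)=\min\{D_1,D_2\}\widetilde A(u)+|D_2-D_1|R(u)$ with a one-row residual $R$, check that the $\min$-part produces the first two terms, and show that the residual quadratic form is a perfect square---is precisely the ``straightforward computation'' the paper alludes to (it gives no proof beyond the pointer to \cite[(4.61)]{Jue16}). Your Hessian $h''_{ij}=\delta_{ij}/u_i+1/u_0$ is correct, the split of $A$ is correct, and the $\min$-contribution does reduce, via $1-u_i=u_0+u_j$, to the first two terms of the claim. One small simplification: $h''(u)A(u)$ is automatically symmetric here (since $h''A=h''Bh''$ with $B=A(h'')^{-1}=\operatorname{diag}(D_iu_0u_i)$ symmetric), so no symmetrization step is needed.

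The genuine problem is the final step, which you leave unresolved with ``up to sign conventions and the $u_1\leftrightarrow u_2$ swap''. If you execute your own plan for $D_1\le D_2$, with $R=\left(\begin{smallmatrix}0&0\\ u_2&1-u_1\end{smallmatrix}\right)$, the residual comes out as
\begin{equation*}
  (D_2-D_1)\bigg[\frac{u_2}{u_0}z_1^2+\frac{2(1-u_1)}{u_0}z_1z_2
  +\frac{(1-u_1)^2}{u_0u_2}z_2^2\bigg]
  =(D_2-D_1)\frac{u_2}{u_0}\bigg(z_1+\frac{1-u_1}{u_2}z_2\bigg)^2,
\end{equation*}
i.e.\ with a \emph{plus} sign and with $1-u_1$, not the printed $z_1-\frac{1-u_2}{u_2}z_2$ (and for $D_2<D_1$ one gets $(D_1-D_2)\frac{u_1}{u_0}(z_2+\frac{1-u_2}{u_1}z_1)^2$). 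The identity as stated in the lemma is in fact false: for $u=(0.2,0.3)$, $D_1=1$, $D_2=2$, $z=(0,1)$ the left-hand side equals $(h''A)_{22}=D_1u_1/u_0+D_2(1-u_1)(1/u_2+1/u_0)=134/15$, whereas the stated right-hand side gives $u_0/u_2+(1/u_0+1)+(1-u_2)^2/(u_0u_2)=119/15$; the corrected square $(1-u_1)^2/(u_0u_2)$ restores equality. Your discriminant test will correctly certify that the residual is a perfect square, but completing that square yields the cross coefficient $+2(1-u_1)/u_0$, not $-2(1-u_2)/u_0$, so a proof of the literal statement cannot succeed. The discrepancy is immaterial for the rest of the paper---only the nonnegativity of the third term is used, so the lower bound \eqref{2.B} survives---but you must either prove the corrected identity or record that the printed one needs this fix; hedging on the sign is not an option in a claimed exact identity.
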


Let $w=h'(u)$, i.e.\ $w_i=\pa h/\pa u_i=\log(u_i/u_0)$ for $i=1,2$, and recall that $B=A(u)h''(u)^{-1}$. Then, by Lemma \ref{lem.h2A}, for some $c>0$,
$$
  \pa_x w\cdot B\pa_x w = (\pa_x u)\cdot h''(u)A(u)(\pa_x u)
  \ge c\sum_{i=1}^2 u_0(\pa_x\sqrt{u_i})^2 + c(\pa_x\sqrt{u_0})^2,
$$
which provides gradient bounds; also see \eqref{2.B} below.

\begin{lemma}\label{lem.ode}
Let $f_i,g_i\in L^2(0,T)$ be such that $f_i,g_i\ge 0$ for $i=1,2$. Then there exists a unique solution to
\begin{align}
  \pa_t\Lambda_n &= \beta_1(1-\Lambda_n)f_1(t) 
  - \alpha_2\Lambda_n g_1(t), \label{2.ode1} \\
  \pa_t\Lambda_s&= \beta_2(1-\Lambda_s)f_2(t)
  - \alpha_1\Lambda_s g_2(t), \quad t>0, \label{2.ode2}
\end{align}
with the initial conditions $\Lambda_n(0)=\Lambda_n^0\in[0,1]$ and
$\Lambda_s(0)=\Lambda_s^0\in[0,1]$ satisfying $0\le\Lambda_n(t),\Lambda_s(t)\le 1$ for $t\ge 0$. 
\end{lemma}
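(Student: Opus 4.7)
The plan is to observe that the system \eqref{2.ode1}--\eqref{2.ode2} is decoupled: each equation is a scalar linear ODE in one of $\Lambda_n$, $\Lambda_s$. Rewriting, for example,
\begin{equation*}
  \pa_t\Lambda_n + \big[\beta_1 f_1(t) + \alpha_2 g_1(t)\big]\Lambda_n = \beta_1 f_1(t),
\end{equation*}
we have a linear Cauchy problem with coefficient and source in $L^2(0,T)\subset L^1(0,T)$. Existence and uniqueness of an absolutely continuous solution therefore follows from the standard Carath\'eodory theory for linear ODEs with integrable coefficients (equivalently, by integrating against the integrating factor $\exp(\int_0^t(\beta_1 f_1+\alpha_2 g_1)\,ds)$). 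The same argument works verbatim for $\Lambda_s$.

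For the bounds on $\Lambda_n$, I would use the explicit Duhamel formula
\begin{equation*}
  \Lambda_n(t) = e^{-F(t)}\Lambda_n^0 + \int_0^t e^{-(F(t)-F(s))}\beta_1 f_1(s)\,ds,
  \qquad F(t) := \int_0^t\big(\beta_1 f_1 + \alpha_2 g_1\big)ds.
\end{equation*}
Non-negativity is immediate: $\Lambda_n^0\ge 0$, $f_1\ge 0$, and the exponentials are positive. For the upper bound $\Lambda_n\le 1$, I would use that $\beta_1 f_1(s) \le F'(s)$ because $\alpha_2 g_1\ge 0$, so the integral is dominated by $\int_0^t e^{-(F(t)-F(s))}F'(s)ds = 1 - e^{-F(t)}$; combined with $\Lambda_n^0\le 1$ and $e^{-F(t)}\le 1$, this yields $\Lambda_n(t)\le e^{-F(t)} + 1 - e^{-F(t)} = 1$. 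The bounds for $\Lambda_s$ follow by the same reasoning with $(\beta_1,\alpha_2,f_1,g_1)$ replaced by $(\beta_2,\alpha_1,f_2,g_2)$.

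An alternative that I would mention if brevity is desired: the bounds $0\le\Lambda_n\le 1$ can also be read off by a comparison/invariant region argument. Indeed, on the face $\Lambda_n=0$ the vector field reads $\beta_1 f_1(t)\ge 0$, pointing inward, while on $\Lambda_n=1$ it reads $-\alpha_2 g_1(t)\le 0$, again pointing inward; since the right-hand side is globally Lipschitz in $\Lambda_n$ (with $L^1$ coefficient), the interval $[0,1]$ is positively invariant. There is no real obstacle here; the only minor care point is that coefficients are merely $L^2$ in time, so one must invoke Carath\'eodory's theorem rather than the classical Picard--Lindel\"of statement, and interpret the differential equation in the a.e.\ sense.
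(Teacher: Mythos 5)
Your proof is correct, and it takes a genuinely different route from the paper's. For existence and uniqueness, the paper does not use the explicit integrating factor: it sets up the integral operator $\Gamma[\widetilde{\Lambda}](t)=\Lambda_n^0+\int_0^t[\beta_1(1-\widetilde{\Lambda})f_1-\alpha_2\widetilde{\Lambda}g_1]\,ds$ and applies Banach's fixed-point theorem on $C^0([0,T_0])$, obtaining a contraction from the smallness of $\|f_1\|_{L^1(0,T_0)}+\|g_1\|_{L^1(0,T_0)}$ for small $T_0$ and then extending the solution stepwise to $[0,T]$. For the bounds, the paper uses a Stampacchia-type truncation argument: it multiplies the equation by the negative part of $\Lambda_n$ and by $(\Lambda_n-1)^+$ and shows that the resulting squared truncations are nonincreasing, hence identically zero. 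Your Duhamel representation $\Lambda_n(t)=e^{-F(t)}\Lambda_n^0+\int_0^t e^{-(F(t)-F(s))}\beta_1 f_1(s)\,ds$ delivers both conclusions at once and more explicitly: nonnegativity is immediate, and your majorization $\beta_1 f_1\le F'$ together with $\int_0^t e^{-(F(t)-F(s))}F'(s)\,ds=1-e^{-F(t)}$ cleanly gives $\Lambda_n\le 1$. What your approach buys is brevity and an explicit solution formula, fully exploiting that each equation is scalar and linear; what the paper's approach buys is robustness, since both the fixed-point setup and the truncation argument would survive a nonlinear or coupled right-hand side and match the machinery used elsewhere in the paper. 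Your care point about invoking Carath\'eodory theory (coefficients only in $L^2\subset L^1$, equation understood a.e., solution absolutely continuous) is exactly the right one and is consistent with the paper's claim of a unique absolutely continuous solution.
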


\begin{proof}
The existence of a unique absolutely continuous solution to the differential system \eqref{2.ode1}--\eqref{2.ode2} follows from a standard application of Banach's fixed-point theorem. We sketch the argument for the convenience of the reader.

Let $T'<T$ and
\begin{align*}
  \Gamma[\widetilde{\Lambda}](t)
  := \Lambda_{n}^{0}+\int_{0}^{t}\big[\beta_{1}(1-\widetilde{\Lambda}(s))
  f_{1}(s)-\alpha_{2}\widetilde{\Lambda}(s)g_{1}(s)\big]ds,\quad t\in[0,T'].
\end{align*} 
Exploiting the linearity with respect to $\widetilde{\Lambda}$, standard estimates show the Lipschitz continuity of $\Gamma:C^0([0,T'])\to C^0([0,T'])$:
\begin{align*}
  \|\Gamma[\widetilde{\Lambda}_{1}]-\Gamma[\widetilde{\Lambda}_{2}]
  \|_{L^\infty(0,T')}
  \leq (\alpha_{2}\|g_{1}\|_{L^1(0,T')}
  + \beta_{1}\|f_{1}\|_{L^1(0,T')})
  \|\widetilde{\Lambda}_{1}-\widetilde{\Lambda}_{2}\|_{L^\infty(0,T')}.
\end{align*}
Due to 
\begin{align*}
  \|f_{1}\|_{L^1(0,T')}+\|g_{1}\|_{L^1(0,T')}\leq \sqrt{T'}(\|f_{1}\|_{L^2(0,T)}+\|g_{1}\|_{L^2(0,T)})\rightarrow 0
\end{align*}
as $T'\rightarrow 0$, there exists some $T_{0}<T$ such that
\begin{align}\label{2.ode.cont}
  \alpha_{2}\|g_{1}\|_{L^1(0,T_{0})}
  + \beta_{1}\|f_{1}\|_{L^1(0,T_{0})}
  < \sqrt{T_{0}}(\alpha_{2}\|g_{1}\|_{L^2(0,T)}
  +\beta_{1}\|f_{1}\|_{L^2(0,T)})<1,
\end{align}
i.e., $\Gamma$ is a contraction on $C^0([0,T_{0}])$. Banach's fixed-point theorem yields a unique solution to \eqref{2.ode1} on $[0,T_{0}]$. In view of \eqref{2.ode.cont}, this procedure can be repeated on intervals $[a,b]$, satisfying $0\leq a<b\leq T$ and  $b-a<T_{0}$. Hence, the solution can be progressively extended to the whole interval $[0,T]$. Similarl,y one proceeds for (\ref{2.ode2}).   

Multiplying \eqref{2.ode1} by $\Lambda_n^-:=\max\{0,\Lambda_n\}$ yields
$$
  \frac12\frac{d}{dt}(\Lambda_n^-)^2
  = \beta_1 f_1(t)(1-\Lambda_n)\Lambda_n^- 
  - \alpha_2 g_1(t)(\Lambda_n^-)^2 \le 0,
$$
using $f_1\ge 0$ and $g_1\ge 0$. We conclude from $\Lambda_n^-(0)=0$ that $\Lambda_n(t)\ge 0$ for $t\ge 0$. In a similar way, we infer after multiplication of \eqref{2.ode2} by $(\Lambda_n-1)^+:=\max\{0,\Lambda_n-1\}$ that
$$
  \frac12\frac{d}{dt}[(\Lambda_n-1)^+]^2
  = -\beta_1 f_1(t)(\Lambda_n-1)(\Lambda_n-1)^+
  - \alpha_2 g_1(t)\Lambda_n(\Lambda_n-1)^+ \le 0,
$$
which implies that $\Lambda_n(t)\le 1$ since $\Lambda_n(0)\le 1$. 
The proof of $0\le\Lambda_s\le 1$ is similar. 
\end{proof}

\subsection{Solution of an approximate system}

The approximate system is defined by an implicit Euler discretization and a regularization in the entropy variable. Let $T>0$, $N\in\N$, $\tau=T/N$, $t_k=k\tau$ for $k=0,\ldots,N$, and $\eps>0$. 
Let $k\ge 1$ and $u^{k-1}\in L^\infty(\Omega;\R^2)$ be given. We wish to find a solution $w^k=(w_1^k,w_2^k)\in H^1(\Omega;\R^2)$ to 
\begin{align}
  \frac{1}{\tau}&\int_\Omega(u(w^k)-u^{k-1})\cdot\phi dx
  + \int_\Omega \pa_x\phi\cdot B(w^k)\pa_x w^k dx
  - \int_\Omega\sum_{i=1}^2 u_0(w^k) u_i(w^k)
  \pa_x V_i\pa_x\phi_i dx \nonumber \\
  &+ \sum_{i=1}^2\big(J_i^1[u(w^k)](t_k)\phi_i(1)-J_i^0[u(w^k)](t_k)
  \phi_i(0)\big) + \eps\int_\Omega(\pa_x w^k\cdot\pa_x\phi+ w^k\cdot\phi)dx = 0 \label{2.approx}
\end{align}
for all $\phi\in H^1(\Omega;\R^2)$. The function $u_i(w^k)$ equals $u_i(w^k)=\exp w_i^k/(1+\exp w_1^k+\exp w_2^k)$, and the entries of the matrix $B(w^k)$ are $B_{ij}(w^k)=D_i u_0(w^k)u_i(w^k)\delta_{ij}$ for $i,j=1,2$. We set $u^k:=u(w^k)$ to simplify the notation.

The pool concentrations $\Lambda_{n}^{k}$ and $\Lambda_{s}^{k}$ at iteration step $k$ are defined by $\Lambda_{j}^{k}=\Lambda_{j}(t)$ for $(k-1)\tau<t\leq k\tau$, where $\Lambda_{j}$ for $j=n,s$ are the solutions of the following fixed-point problem
\begin{align}
  \Lambda_{n}(t) &= \Lambda_{n}^{0} + \sum_{j=0}^{k-2}\bigg(\beta_{1}
  \int_{j\tau}^{(j+1)\tau}(1-\Lambda_{n}(r))u_{0}^{j}(1,r)
  u_{1}^{j}(1,r) dr
  - \alpha_{2}\int_{j\tau}^{(1+j)\tau}\Lambda_{n}(r)u_{0}^{j}(1,r) dr 
  \bigg) \nonumber \\
  &\phantom{xx}+ \beta_{1}\int_{(k-1)\tau}^{t}(1-\Lambda_{n}(r))u_{0}^{k-1}(1,r)
  u_{1}^{k-1}(1,r)dr - \alpha_{2}\int_{(k-1)\tau}^{t}\Lambda_{n}(r)
  u_{0}^{k-1}(1,r) dr, \label{2.lambda.tk} \\
  \Lambda_{s}(s) &= \Lambda_{s}^{0} + \sum_{j=0}^{k-2}\bigg(
  \beta_{2}\int_{j\tau}^{(j+1)\tau}
  (1-\Lambda_{s}(r))u_{0}^{j}(0,r)u_{2}^{j}(0,r)dr
  - \alpha_{1}\int_{j\tau}^{(1+j)\tau}\Lambda_{s}(r)u_{0}^{j}(0,r)dr
  \bigg) \nonumber \\
  &\phantom{xx}+ \beta_{2}\int_{(k-1)\tau}^{t}(1-\Lambda_{n}(r))
  u_{0}^{k-1}(0,r)u_{2}^{k-1}(0,r)dr
  - \alpha_{2}\int_{(k-1)\tau}^{t}\Lambda_{n}(r)u_{0}^{k-1}(0,r) dr.
  \nonumber
\end{align}
These equations can be interpreted as differential equations of the form
\begin{align*}
  \pa_t\Lambda_n &= \beta_1(1-\Lambda_n)f_1(t) 
  - \alpha_2\Lambda_n g_1(t), \\
  \pa_t\Lambda_s&= \beta_2(1-\Lambda_s)f_2(t)
  - \alpha_1\Lambda_s g_2(t), \quad t>0, 
\end{align*}
with suitable step functions $f_{i},g_{i}$, $i=1,2$. It follows from $|u_{0}^{k}|,|u_{i}^{k}|\leq 1$ that $f_{i},g_{i}\in L^2(0,T)$, and Lemma \ref{lem.ode} guarantees a unique solution to \eqref{2.lambda.tk}.

The variable $w_i^k=\log(u_i(w^k)/u_0(w^k))$ can be interpreted as the chemical potential, different from the electrochemical potential $\mu_i$ used in the introduction, which also includes the eletric potential $V_i$. The following analysis could also be  carried out using $\mu_i$ instead of $w_i$. 

\begin{lemma}
There exists a solution $w^k\in H^1(\Omega)$ to \eqref{2.approx} satisfying the discrete entropy inequality
\begin{align}\label{2.dei}
  H(u^k)-H(u^{k-1}) + \frac{c\tau}{2}\int_\Omega
  \pa_x w^k\cdot B(w^k)\na_x w^k dx
  + \eps\tau\int_\Omega(|\pa_x w^k|^2 + |w^k|^2)dx \le C\tau,
\end{align}
where $c=\min\{D_1,D_2\}$ and $C>0$ only depends on $\alpha_i$, $\eta_i$, $D_i$, and the $L^2(\Omega)$ norm of $|\pa_x V_i|^2$
for $i=1,2$.
\end{lemma}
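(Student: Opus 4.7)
The plan is to obtain $w^k$ via the Leray--Schauder fixed-point theorem applied to a linearization of \eqref{2.approx}, and to derive the discrete entropy inequality \eqref{2.dei} by testing \eqref{2.approx} with $\phi=w^k$ itself. These steps are intertwined: the entropy inequality supplies, through the $\eps$-regularization term, the uniform a priori $H^1(\Omega)$ bound on fixed points that Leray--Schauder requires.

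For the fixed-point step, I introduce $F\colon L^\infty(\Omega;\R^2)\times[0,1]\to L^\infty(\Omega;\R^2)$, $F(y,\sigma)=w$, where $w\in H^1(\Omega;\R^2)$ is the unique solution of the linear elliptic problem obtained from \eqref{2.approx} by replacing the diffusion matrix $B(w^k)$ and the coefficients $u(w^k)$ appearing in the time-discrete, drift, and boundary terms by $B(y)$ and $u(y)$, and by multiplying the non-regularization part of the equation by $\sigma$. The bilinear form
$\int_\Omega\pa_x\phi\cdot B(y)\pa_xw\,dx+\eps\int_\Omega(\pa_xw\cdot\pa_x\phi+w\cdot\phi)\,dx$
is coercive on $H^1(\Omega;\R^2)$ thanks to the $\eps$-regularization (since $B(y)$ is only positive semidefinite), and the right-hand side is a bounded linear functional on $H^1(\Omega;\R^2)$ because $|u(y)|\le 1$, the pool variables $\Lambda_n^k,\Lambda_s^k\in[0,1]$ by Lemma \ref{lem.ode}, and $V_i\in H^1(\Omega)\hookrightarrow L^\infty(\Omega)$ in one dimension. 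Lax--Milgram gives existence and uniqueness of $w$. Continuity and compactness of $F$ follow from the compact embedding $H^1(\Omega)\hookrightarrow C(\overline\Omega)$ together with the continuity of $y\mapsto u(y)$, and $F(\cdot,0)=0$ is immediate.

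For the entropy inequality I test \eqref{2.approx} with $\phi=w^k$. Convexity of $h$ gives $h(u^k)-h(u^{k-1})\le w^k\cdot(u^k-u^{k-1})$ pointwise, so the discrete time derivative integrates to at least $\tau^{-1}(H(u^k)-H(u^{k-1}))$. Via the identity $\pa_xw\cdot B(w)\pa_xw=\pa_xu\cdot h''(u)A(u)\pa_xu$, Lemma \ref{lem.h2A} shows that the diffusion term is pointwise nonnegative and bounded below by the coercive quadratic expression involving $\min\{D_1,D_2\}$. The drift term is absorbed by a weighted Young inequality (using $u_0u_i\le 1$):
\begin{equation*}
\Big|\int_\Omega u_0u_i\pa_xV_i\pa_xw_i^k\,dx\Big|\le\frac{D_i}{4}\int_\Omega u_0u_i(\pa_xw_i^k)^2\,dx+\frac{1}{D_i}\|\pa_xV_i\|_{L^2(\Omega)}^2,
\end{equation*}
retaining at least $(c/2)\int_\Omega\pa_xw^k\cdot B(w^k)\pa_xw^k\,dx$ on the left after summing over $i$.

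The main obstacle is the boundary contribution $\sum_i[J_i^1[u^k]w_i^k(1)-J_i^0[u^k]w_i^k(0)]$, which must be bounded below by a constant (so that its negative is bounded above when moved to the right-hand side). The crucial structural feature is that after inserting \eqref{1.bc10}--\eqref{1.bc21} each of the four terms carries an explicit factor $u_0$ (or $u_0u_i$) at the appropriate endpoint. Writing $w_i^k=\log u_i^k-\log u_0^k$, one finds for instance
\begin{equation*}
-J_1^0[u^k]w_1^k(0)=\alpha_1\Lambda_s^k\bigl(u_0\log u_0-u_0\log u_1\bigr)\big|_{x=0}\ge\alpha_1\Lambda_s^k\,u_0\log u_0\big|_{x=0}\ge-\alpha_1 e^{-1},
\end{equation*}
using $-u_0\log u_1\ge 0$ (since $0<u_1\le 1$) and the elementary bound $u_0\log u_0\ge-e^{-1}$ on $[0,1]$. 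Analogously $J_2^1[u^k]w_2^k(1)\ge-\alpha_2 e^{-1}$, while the two remaining terms $J_1^1[u^k]w_1^k(1)$ and $-J_2^0[u^k]w_2^k(0)$ carry the extra factor $u_i\in[0,1]$ and are controlled both above and below by $|u_0u_i\log u_i|\le e^{-1}$ together with $|u_0u_i\log u_0|\le e^{-1}$; the prefactors are bounded by Lemma \ref{lem.ode}. Collecting the boundary, convexity, diffusion, and Young estimates and multiplying by $\tau$ produces \eqref{2.dei}. The uniform $H^1(\Omega)$ bound on $w^k$ afforded by the $\eps$-regularization term then supplies the a priori estimate needed to close the Leray--Schauder argument.
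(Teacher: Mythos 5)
Your proposal follows essentially the same route as the paper: Lax--Milgram for the $\sigma$-parametrized linearized problem, a Leray--Schauder homotopy whose a priori bound comes from the $\eps$-term, testing with $w^k$, convexity of $h$ for the time-discrete term, Lemma \ref{lem.h2A} for coercivity of the $B$-form, a weighted Young inequality for the drift, and the factor $u_0$ in \eqref{1.bc10}--\eqref{1.bc21} combined with the boundedness of $z\mapsto z\log z$ on $[0,1]$ and $0\le\Lambda_n,\Lambda_s\le 1$ for the boundary terms (your boundary estimates are in fact stated a bit more carefully than the paper's, which loosely calls two of the terms ``nonpositive'' although they contain the bounded positive contribution $-u_0\log u_0$). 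The one technical slip is the choice of $L^\infty(\Omega;\R^2)$ as the fixed-point space: the functionals $J_i^0[u(y)]$ and $J_i^1[u(y)]$ require point values of $y$ at $x=0,1$, which are not defined for $y\in L^\infty(\Omega)$, and the paper works in $C^0(\overline\Omega;\R^2)$ precisely for this reason --- your own compactness argument via $H^1(\Omega)\hookrightarrow C^0(\overline\Omega)$ already lands the range of the map there, so the fix is cosmetic.
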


\begin{proof}
The proof is similar to that one of Lemma 5 in \cite{GeJu18}, and we highlight the differences only. By the Lax--Milgram lemma, for any given $y\in H^1(\Omega;\R^2)$ and $\sigma\in[0,1]$, there exists a unique solution to the linear problem $a(v,\phi)=\sigma F(\phi)$ 
for all $\phi\in H^1(\Omega;\R^2)$, where
\begin{align*}
  a(v,\phi) &= \int_\Omega\pa_x\phi\cdot B(y)\pa_x v dx
  + \eps\int_\Omega(\pa_x w\cdot\pa_x\phi + w\cdot\phi)dx, \\
  F(\phi) &= -\frac{1}{\tau}\int_\Omega(u(y)-u^{k-1})\cdot\phi dx
  + \int_\Omega\sum_{i=1}^2 u_0(y) u_i(y)\pa_x V_i\pa_x\phi_i dx \\
  &\phantom{xx}- \sum_{i=1}^2\big(J_i^1[u(y)](t_k)\phi_i(1)
  - J_i^0[u(y)](t_k)\phi_i(0)\big)\quad\mbox{for }
  v,\phi\in H^1(\Omega;\R^2).
\end{align*}
This defines the fixed-point operator $S:C^0([0,T];\R^2)\times[0,1]
\to C^0([0,T];\R^2)$, $S(y,\sigma)=v$, where $v$ lies in fact in the space $H^1(\Omega;\R^2)$. Compared to \cite{Jue15}, we work with the space $C^0([0,T];\R^2)$ instead of $L^\infty(\Omega;\R^2)$ to ensure that the evaluation on the boundary points is well defined. By standard arguments (see, e.g., \cite[Lemma 5]{Jue15}), $S(y,0)=0$,
$S$ is continuous and compact, since the embedding $H^1(\Omega)\hookrightarrow C^0([0,T])$ is compact. It remains to prove a uniform bound for all fixed points of $S(\cdot,\sigma)$. 

We choose $\phi=v$ in $a(v,\phi)=\sigma F(\phi)$ to find that
\begin{align}\label{2.aux}
  \frac{\sigma}{\tau}&\int_\Omega(u(v)-u^{k-1})\cdot v dx
  + \int_\Omega\pa_x v\cdot B(v)\pa_x v dx 
  + \eps\int_\Omega(|\pa_x v|^2 + |v|^2)dx \nonumber \\
  &= \sigma\int_\Omega\sum_{i=1}^2 u_0(v)u_i(v)\pa_x V_i\pa_x v_i dx
  - \sigma\sum_{i=1}^2\big(J_i^1[u(v)](t_k)v_i(1)
  - J_i^0[u(v)](t_k)v_i(0)\big) \\
  &=: I_1 + I_2. \nonumber
\end{align}
The convexity of the entropy density $h$ implies that
$$
  (u(v)-u^{k-1})\cdot v
  = (u(v)-u^{k-1})\cdot h'(u(v)) 
  \ge h(u(v))-h(u^{k-1}).
$$
We conclude from Lemma \ref{lem.h2A} that
\begin{align}\label{2.B}
  \pa_x v\cdot B(v)\pa_x v
  &= \pa_x u(v)\cdot h''(u(v))A(u(v))\pa_x u(v) \\
  &\ge c\bigg(\sum_{i=1}^2u_0(v)\frac{|\pa_x u_i(v)|^2}{u_i(v)}
  + \frac{|\pa_x u_0(v)|^2}{u_0(v)}\bigg), \nonumber
\end{align}
where $c=\min\{D_1,D_2\}>0$.
For the first term on the right-hand side of \eqref{2.aux}, we observe
that the derivative of $v_i=\log(u_i/u_0)$ equals
$\pa_x v_i = \pa_x u_i(v)/u_i(v) - \pa_x u_0(v)/u_0(v)$.
Therefore, for any $\delta>0$,
\begin{align*}
  I_1 &\le \int_\Omega\sum_{i=1}^2
  \big(u_0(v)|\pa_x u_i(v)|+u_i(v)|\pa_x u_0(v)|\big)|\pa_x V_i|dx \\
  &\le \delta\int_\Omega\sum_{i=1}^2\big(u_0(v)^2|\pa_x u_i(v)|^2
  + u_i(v)^2|\pa_x u_0(v)|^2\big)dx
  + C(\delta)\int_\Omega\sum_{i=1}^2|\pa_x V_i|^2 dx \\
  &\le \delta\int_\Omega\bigg(\sum_{i=1}^2 u_0(v)
  \frac{|\pa_x u_i(v)|^2}{u_i(v)} + \frac{|\pa_x u_0(v)|^2}{u_0(v)}
  \bigg)dx + C(\delta),
\end{align*}
where we used $u_i(v)\le 1$, $u_0(v)\le 1$, and the assumption $V_i\in H^1(\Omega)$ in the last step. Choosing $\delta=c/2$, the first term on the right-hand side can be absorbed by the second term on the left-hand side of \eqref{2.aux}, thanks to \eqref{2.B}. Finally, using definitions 
\eqref{1.bc10}--\eqref{1.bc21} and $v_i=\log(u_i(v)/u_0(v))$,
\begin{align*}
  I_2 &= -\sigma\beta_1(1-\Lambda_n)u_0(v(1))u_1(v(1))
  \log\frac{u_1(v(1))}{u_0(v(1))}
  + \sigma\alpha_1\Lambda_s u_0(v(0))\log\frac{u_1(v(0))}{u_0(v(0))} \\
  &\phantom{xx}+ \sigma\alpha_2\Lambda_n 
  u_0(v(1))\log\frac{u_2(v(1))}{u_0(v(1))}
  - \sigma\beta_2(1-\Lambda_s)u_0(v(0))u_2(v(0))
  \log\frac{u_2(v(0))}{u_0(v(0))}.
\end{align*}
Since $z\mapsto z\log z$ is bounded for $z\in[0,1]$ and $\Lambda_n\le 1$, $\Lambda_s\le 1$ by Lemma \ref{lem.ode}, the first and
fourth terms on the right-hand side are bounded from above. Furthermore, we deduce from the fact that $\log u_i(v(x))$ is nonpositive for 
$i=1,2$ and $x=0,1$ that the second and third terms are nonpositive. This shows that $I_2\le C$ for some constant $C>0$ which depends only on $\alpha_i$ and $\beta_i$. 

Summarizing, \eqref{2.aux} becomes
\begin{align*}
  H(u(v)) - H(u^{k-1}) + \frac{\tau}{2}\int_\Omega 
  \pa_x v\cdot B(v)\pa_x v dx 
  + \eps\int_\Omega(|\pa_x v|^2 + |v|^2)dx \le C\tau,
\end{align*}
and $C>0$ only depends on $\alpha_i$, $\beta_i$, $D_i$, and the $L^2(\Omega)$ norm of $|\pa_x V_i|^2$ for $i=1,2$. In view of the positive semidefiniteness of $B(v)$, this inequality provides a uniform bound for $v$ in $H^1(\Omega;\R^2)$ (also being uniform in $\sigma\in[0,1]$, but not uniform in $\eps$). Hence, we can apply the fixed-point theorem of Leray and Schauder to conclude the existence of a fixed point of $S(\cdot,1)$, which is a solution to \eqref{2.approx}. Defining $w^k:=v$, this fixed point satisfies \eqref{2.dei}.
\end{proof}

Summing the discrete entropy inequality \eqref{2.dei} over $k$ leads to the following result.

\begin{lemma}
There exists $C>0$ independent of $(\eps,\tau)$ (but depending on $T$) such that
\begin{align}\label{2.dei2}
  H(u^j) &+ c\sum_{k=1}^j\tau\int_\Omega\bigg(\sum_{i=1}^2
  u_0^k|\pa_x(u_i^k)^{1/2}|^2 + |\pa_x u_0^k|^2 
  + |\pa_x (u_0^k)^{1/2}|^2\bigg)dx \\
  & + \eps C\sum_{k=1}^j\tau
  \sum_{i=1}^2 \|w_i^k\|_{H^1(\Omega)}^2 \le H(u^0) + C. \nonumber
\end{align}
\end{lemma}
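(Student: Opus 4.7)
The plan is to iterate the one-step inequality \eqref{2.dei} over $k=1,\dots,j$ and extract from the entropy-production quadratic form $\pa_x w^k\cdot B(w^k)\pa_x w^k$ the three gradient quantities appearing in \eqref{2.dei2}. The key observation, already recorded after Lemma \ref{lem.h2A}, is the pointwise identity
\[
  \pa_x w^k\cdot B(w^k)\pa_x w^k = \pa_x u^k\cdot h''(u^k)A(u^k)\pa_x u^k,
\]
which allows me to apply Lemma \ref{lem.h2A} with $z=\pa_x u^k\in\R^2$. Using the constraint $z_1+z_2=\pa_x u_1^k+\pa_x u_2^k=-\pa_x u_0^k$ coming from $u_0^k=1-u_1^k-u_2^k$, Lemma \ref{lem.h2A} yields the pointwise lower bound
\[
  \pa_x w^k\cdot B(w^k)\pa_x w^k
  \ge c\sum_{i=1}^2 u_0^k\,\frac{|\pa_x u_i^k|^2}{u_i^k}
  + c\Bigl(\frac{1}{u_0^k}+1\Bigr)|\pa_x u_0^k|^2,
\]
with $c=\min\{D_1,D_2\}$. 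The chain-rule identities $u_0^k|\pa_x u_i^k|^2/u_i^k=4u_0^k|\pa_x\sqrt{u_i^k}|^2$ and $|\pa_x u_0^k|^2/u_0^k=4|\pa_x\sqrt{u_0^k}|^2$, together with the ``$+1$'' term that supplies the extra $|\pa_x u_0^k|^2$, let me rewrite this as a lower bound of exactly the form appearing in \eqref{2.dei2} (after absorbing the factor $4$ into $c$).

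Next, I would substitute this lower bound into \eqref{2.dei} for each $k$ and sum from $k=1$ to $j$. The entropy differences on the left telescope to $H(u^j)-H(u^0)$, while the gradient sums accumulate as desired and the $\eps$-term $\eps\tau\int(|\pa_x w^k|^2+|w^k|^2)dx=\eps\tau\|w^k\|_{H^1(\Omega)}^2$ sums to $\eps\sum_{k=1}^j \tau\|w^k\|_{H^1(\Omega)}^2$. The right-hand side totals $\sum_{k=1}^j C\tau = Cj\tau \le CT$. Moving $H(u^0)$ to the right produces \eqref{2.dei2} with $C$ depending on $T$, on $\alpha_i,\beta_i,D_i$, on $\|V_i\|_{H^1(\Omega)}$, and on $H(u^0)$, but not on $\eps$ or $\tau$. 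Note that $|H(u^0)|$ is finite because the initial datum lies in $\overline{\dom}$ and the map $z\mapsto z(\log z-1)$ is bounded on $[0,1]$; the same argument shows that $H(u^j)$ (on the left) is bounded from below, which is what lets the gradient terms absorb no negative mass from telescoping.

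I expect no serious obstacle: the lemma is essentially a bookkeeping consequence of Lemma \ref{lem.h2A} and the one-step inequality \eqref{2.dei}. The only two points that require mild care are (i) recovering both $|\pa_x u_0^k|^2$ \emph{and} $|\pa_x\sqrt{u_0^k}|^2$ from the bound --- which is what the ``$(1/u_0+1)$'' factor in Lemma \ref{lem.h2A} is designed to give --- and (ii) ensuring that the $\eps$-independent bound on the right-hand side truly does not degenerate as $\eps\to 0$, which is automatic because the $\eps$-terms appear only on the left and the constant $C$ in \eqref{2.dei} was already shown to be independent of $\eps$.
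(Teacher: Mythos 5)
Your proof is correct and follows essentially the same route as the paper: apply Lemma \ref{lem.h2A} with $z=\pa_x u^k$ (using $z_1+z_2=-\pa_x u_0^k$) to bound the entropy production in \eqref{2.dei} from below by the three gradient terms, then sum over $k=1,\dots,j$, telescope the entropy differences, and use $j\tau\le T$. The remarks about the ``$(1/u_0+1)$'' factor and the $\eps$-independence are exactly the points the paper relies on.
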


\begin{proof}
We infer from \eqref{2.dei} and Lemma \ref{lem.h2A} that
\begin{align*}
  H(u^k)-H(u^{k-1}) &+ c\tau\int_\Omega\bigg(\sum_{i=1}^2
  u_0^k|\pa_x(u_i^k)^{1/2}|^2 + |\pa_x u_0^k|^2 
  + |\pa_x (u_0^k)^{1/2}|^2\bigg)dx \\
  &+ \eps\tau\sum_{i=1}^2 \|w_i^k\|_{H^1(\Omega)}^2 \le C\tau,
  \nonumber
\end{align*}
where $c>0$ depends only on $D_1,D_2$ and $C>0$ is independent of
$\eps,\tau$ and $k$. We sum this inequality over $k=1,\ldots,j$
and observe that $\tau j\le T$ to conclude the proof.
\end{proof}

\subsection{Uniform estimates}

We introduce the piecewise constant in time functions 
$u_i^{(\tau)}(x,t)$ $=u_i^k$ and $w_i^{(\tau)}=w_i^k$ for $x\in\Omega$, $t\in((k-1)\tau,k\tau]$, $i=1,2,$. We set $u^{(\tau)}(\cdot,0)=u^0$
and $w^{(\tau)}(\cdot,0)=h'(u^0)$ at time $t=0$. Furthermore, we
introduce the shift operator $(\sigma_\tau u^{(\tau)})(\cdot,t)
=u^{k-1}$ for $t\in((k-1)\tau,k\tau]$. Summing \eqref{2.approx} over $k=1,\ldots,N$ and using the definitions of $B(w^{(\tau)})$ and $w^{(\tau)}$, we infer that the pair $(u^{(\tau)},w^{(\tau)})$ solves
\begin{align}
  \frac{1}{\tau}\int_0^T&\int_\Omega 
  (u_i^{(\tau)}-\sigma_\tau u_i^{(\tau)})\phi_i dxdt 
  + \eps\int_0^T\int_\Omega(\pa_x w_i^{(\tau)}\pa_x\phi_i
  + w^{(\tau)}_i\phi_i)dxdt \label{2.tau} \\
  &+ D_i\int_0^T\int_\Omega(u_0^{(\tau)}\pa_x u_i^{(\tau)}
  - u_i^{(\tau)}\pa_x u_0^{(\tau)} - u_0^{(\tau)}u_i^{(\tau)}\pa_x V_i)
  \pa_x\phi_i dxdt \nonumber \\
  &+ \int_0^T\big(J_i^1[u^{(\tau)}](t)\phi_i(1,t)
  - J_i^0[u^{(\tau)}](t)\phi_i(0,t)\big)dt
  = 0, \nonumber
\end{align}
where $\phi_i:(0,T)\to H^1(\Omega)$ is piecewise constant, $i=1,2$, and $J_i^j[u^{(\tau)}](t)$ is evaluated at the time points $\lceil t/\tau\rceil\tau$, which means, for instance, 
$$
  J_1^0[u^{(\tau)}](t) = \alpha_1\Lambda_s(k\tau)u_0^{(\tau)}(1,t)
  \quad\mbox{for }t\in((k-1)\tau,k\tau].
$$
The discrete entropy inequality gives the following 
uniform bounds.

\begin{lemma}[Gradient bounds]\label{lem.grad}
There exists $C>0$ independent of $(\eps,\tau)$ such that
\begin{align*}
  \sum_{i=1}^2\big\|(u_0^{(\tau)})^{1/2}u_i^{(\tau)}
  \big\|_{L^2(0,T;H^1(\Omega))}
  + \|(u_0^{(\tau)})^{1/2}\|_{L^2(0,T;H^1(\Omega))} &\le C, \\
  \sum_{i=1}^2\|u_0^{(\tau)}u_i^{(\tau)}\|_{L^2(0,T;H^1(\Omega))}
   + \|u_0^{(\tau)}\|_{L^2(0,T;H^1(\Omega))} &\le C.
\end{align*}
\end{lemma}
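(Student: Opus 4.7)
The plan is to read everything off the discrete entropy inequality \eqref{2.dei2}, combined with the pointwise bounds $0\le u_i^k\le 1$ and $0\le u_0^k\le 1$ that hold because $u(w^k)\in\dom$, together with the elementary identity $|\pa_x u_i^k|^2 = 4u_i^k|\pa_x(u_i^k)^{1/2}|^2$. Since $u^{(\tau)}$ is piecewise constant in time, the squared $L^2(0,T;H^1(\Omega))$ norm of any of the four relevant functions equals $\tau$ times the sum over $k=1,\dots,N$ of the corresponding $H^1(\Omega)$ norm squared, which matches the structure of the left-hand side of \eqref{2.dei2} exactly; note also that $H(u^0)$ is finite because $h$ is bounded on $\overline{\dom}$.

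The $L^2(\Omega_T)$ parts of the norms are trivial: each of $u_0^{(\tau)}$, $(u_0^{(\tau)})^{1/2}$, $u_0^{(\tau)}u_i^{(\tau)}$, and $(u_0^{(\tau)})^{1/2}u_i^{(\tau)}$ takes values in $[0,1]$. The gradient bounds for $u_0^{(\tau)}$ and $(u_0^{(\tau)})^{1/2}$ are immediate as well, since the corresponding integrands $|\pa_x u_0^k|^2$ and $|\pa_x(u_0^k)^{1/2}|^2$ appear directly on the left-hand side of \eqref{2.dei2}, multiplied by the universal constant $c$.

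The only real work is in bounding the gradients of the products. I would apply the chain rule
\[
  \pa_x\bigl((u_0^k)^{1/2}u_i^k\bigr) = (u_0^k)^{1/2}\pa_x u_i^k + u_i^k\pa_x(u_0^k)^{1/2}, \qquad
  \pa_x(u_0^k u_i^k) = u_0^k\pa_x u_i^k + u_i^k\pa_x u_0^k,
\]
together with $(a+b)^2\le 2a^2+2b^2$. The cross terms $u_0^k|\pa_x u_i^k|^2$ and $(u_0^k)^2|\pa_x u_i^k|^2\le u_0^k|\pa_x u_i^k|^2$ rewrite as $4u_0^ku_i^k|\pa_x(u_i^k)^{1/2}|^2$ and, since $u_i^k\le 1$, are dominated by $4u_0^k|\pa_x(u_i^k)^{1/2}|^2$, which is the first integrand in \eqref{2.dei2}. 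The remaining cross terms $(u_i^k)^2|\pa_x(u_0^k)^{1/2}|^2$ and $(u_i^k)^2|\pa_x u_0^k|^2$ are dominated by $|\pa_x(u_0^k)^{1/2}|^2$ and $|\pa_x u_0^k|^2$, already controlled. Integrating in $x$, multiplying by $\tau$, summing over $k$ from $1$ to $N$, and invoking \eqref{2.dei2} yields all four bounds.

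There is no genuine obstacle: the argument is a routine chain-rule reduction to the terms controlled by the entropy dissipation. The only point requiring mild care is the use of $u_0^k\le 1$ to pass from $(u_0^k)^2|\pa_x u_i^k|^2$ to the degenerate form $u_0^k|\pa_x u_i^k|^2$ in the second bound; without the $L^\infty$ bound on $u_0^k$, the $L^2(0,T;H^1(\Omega))$ control of $u_0^{(\tau)}u_i^{(\tau)}$ would not follow directly from the entropy dissipation.
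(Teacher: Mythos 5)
Your argument is correct and is essentially the paper's proof: both reduce all four bounds to the terms $u_0^k|\pa_x(u_i^k)^{1/2}|^2$, $|\pa_x u_0^k|^2$, and $|\pa_x(u_0^k)^{1/2}|^2$ controlled by the summed entropy inequality \eqref{2.dei2}, using the product rule, the identity $|\pa_x u_i^k|^2=4u_i^k|\pa_x(u_i^k)^{1/2}|^2$, and the $L^\infty$ bounds. The only cosmetic difference is in the second estimate, where the paper writes $u_0^{(\tau)}u_i^{(\tau)}=(u_0^{(\tau)})^{1/2}\cdot(u_0^{(\tau)})^{1/2}u_i^{(\tau)}$ and reuses the first estimate, whereas you expand $\pa_x(u_0^ku_i^k)$ directly and absorb $(u_i^k)^2|\pa_x u_0^k|^2$ into the $|\pa_x u_0^k|^2$ term of \eqref{2.dei2}; both routes land on the same controlled quantities.
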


\begin{proof}
The first estimate follows from the bound $0\le u_i^{(\tau)}\le 1$ and 
\eqref{2.dei} since
$$
  \big|\pa_x\big((u_0^{(\tau)})^{1/2} u_i^{(\tau)}\big)\big|
  \le \big|(u_0^{(\tau)})^{1/2}\pa_x u_i^{(\tau)}\big|
  + \big|u_i^{(\tau)}\big|\big|\pa_x(u_0^{(\tau)})^{1/2}\big|.
$$
We deduce from the first estimate and 
\begin{align*}
  |\pa_x(u_0^{(\tau)}u_i^{(\tau)})|
  &\le \big|(u_0^{(\tau)})^{1/2}
  \pa_x((u_0^{(\tau)})^{1/2}u_i^{(\tau)})\big|
  + \big|(u_0^{(\tau)})^{1/2}u_i^{(\tau)}
  \pa_x(u_0^{(\tau)})^{1/2}\big| \\
  &\le \big|\pa_x((u_0^{(\tau)})^{1/2}u_i^{(\tau)}\big|
  +  \big|\pa_x(u_0^{(\tau)})^{1/2}\big|,
\end{align*}
the second estimate.
\end{proof}

\begin{lemma}[Discrete time bounds]\label{lem.time}
There exists $C>0$ independent of $(\eps,\tau)$ such that
$$
  \|u_i^{(\tau)} - \sigma_\tau u_i^{(\tau)}\|_{L^2(0,T;H^1(\Omega)')}
  \le C\tau, \quad i=1,2.
$$
\end{lemma}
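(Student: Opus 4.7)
The plan is to test the weak formulation \eqref{2.tau} against an arbitrary $\phi_i\in L^2(0,T;H^1(\Omega))$ and show that the three groups of terms other than the discrete time-derivative---namely the $\eps$-regularization, the flux integral, and the nonlinear Robin boundary contributions---are each controlled by $\|\phi_i\|_{L^2(0,T;H^1(\Omega))}$ uniformly in $(\eps,\tau)$. Since $u_i^{(\tau)}-\sigma_\tau u_i^{(\tau)}$ is piecewise constant in time, one may replace $\phi_i$ by its $L^2$-projection onto piecewise constants in time without changing the duality pairing, so the piecewise constant test function class of \eqref{2.tau} is no restriction.

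First, I would rewrite the flux integrand via the identity $u_0^{(\tau)}\pa_x u_i^{(\tau)} - u_i^{(\tau)}\pa_x u_0^{(\tau)} = \sqrt{u_0^{(\tau)}}\pa_x(\sqrt{u_0^{(\tau)}}u_i^{(\tau)}) - 3\sqrt{u_0^{(\tau)}}u_i^{(\tau)}\pa_x\sqrt{u_0^{(\tau)}}$. By Lemma \ref{lem.grad} both $\sqrt{u_0^{(\tau)}}u_i^{(\tau)}$ and $\sqrt{u_0^{(\tau)}}$ are bounded uniformly in $L^2(0,T;H^1(\Omega))$, and combined with $0\le u_0^{(\tau)},u_i^{(\tau)}\le 1$ and $V_i\in H^1(\Omega)$ this yields a uniform $L^2(\Omega_T)$ bound on the integrand; Cauchy--Schwarz then gives the desired control against $\pa_x\phi_i$. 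The $\eps$-term is handled using the discrete entropy inequality \eqref{2.dei2}, which implies $\sqrt{\eps}\,\|w_i^{(\tau)}\|_{L^2(0,T;H^1(\Omega))}\le C$; consequently $\eps\int_0^T\int_\Omega(\pa_x w_i^{(\tau)}\pa_x\phi_i + w_i^{(\tau)}\phi_i)\,dx\,dt \le C\sqrt{\eps}\,\|\phi_i\|_{L^2(0,T;H^1(\Omega))}$, which is uniformly bounded.

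For the nonlinear Robin boundary terms, Lemma \ref{lem.ode} yields $\Lambda_n,\Lambda_s\in[0,1]$, and with $0\le u_0^{(\tau)},u_i^{(\tau)}\le 1$ one obtains $|J_i^j[u^{(\tau)}](t)|\le \max_i(\alpha_i,\beta_i)$ for a.e.\ $t\in(0,T)$. The one-dimensional Sobolev embedding $H^1(\Omega)\hookrightarrow C^0(\overline{\Omega})$ supplies the trace bound $|\phi_i(j,t)|\le C\,\|\phi_i(\cdot,t)\|_{H^1(\Omega)}$ for $j=0,1$, and Cauchy--Schwarz in time then controls the boundary integral by $C\,\|\phi_i\|_{L^2(0,T;H^1(\Omega))}$.

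Combining these three estimates in \eqref{2.tau} and rearranging gives $\bigl|\int_0^T\!\int_\Omega (u_i^{(\tau)}-\sigma_\tau u_i^{(\tau)})\phi_i\,dx\,dt\bigr|\le C\tau\,\|\phi_i\|_{L^2(0,T;H^1(\Omega))}$, and the lemma follows by duality. I expect the main obstacle to be the boundary contribution: although the pointwise boundedness of $J_i^j[u^{(\tau)}]$ is immediate from the uniform bounds on $\Lambda_n,\Lambda_s,u_0^{(\tau)},u_i^{(\tau)}$, one must not pick up a gradient factor from the test function, which is precisely what the one-dimensional trace inequality ensures. The $\eps$-regularization is the next most delicate point, since a naive Cauchy--Schwarz produces a factor $\eps\,\|w_i^{(\tau)}\|_{L^2(0,T;H^1)}$ that would blow up as $\eps\to 0$; using the $\sqrt{\eps}$-scaling from \eqref{2.dei2} instead keeps everything uniform.
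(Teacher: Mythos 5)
Your proposal is correct and follows essentially the same route as the paper: test the weak formulation \eqref{2.tau}, bound the flux term by the gradient estimates of Lemma \ref{lem.grad}, the $\eps$-term by the $\sqrt{\eps}$-scaling from the entropy inequality, and the boundary terms by the $L^\infty$ bounds on $\Lambda_{n/s}$, $u_0^{(\tau)}$, $u_i^{(\tau)}$ together with the trace embedding, then conclude by duality. Your projection-onto-piecewise-constants remark is just a cleaner phrasing of the paper's density argument.
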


\begin{proof}
Let $\phi_i:(0,T)\to H^1(\Omega)$ be piecewise constant. Then, by \eqref{2.tau} and the $L^\infty(\Omega_T)$ bound of $u_i^{(\tau)}$,
\begin{align}\label{2.time}
  \frac{1}{\tau}&\bigg|\int_0^T\int_\Omega(u_i^{(\tau)}-\sigma_\tau
  u_i^{(\tau)})\phi_i dxdt\bigg| \\
  &\le D_i\big(\|(u_0^{(\tau)})^{1/2}\pa_x u_i^{(\tau)}\|_{L^2(\Omega_T)}
  + \|\pa_x u_0^{(\tau)}\|_{L^2(\Omega_T)} 
  + \|\pa_x V_i\|_{L^2(\Omega_T)}\big)\|\pa_x\phi_i\|_{L^2(\Omega_T)} 
  \nonumber \\
  &\phantom{xx}+ \sum_{j=0}^1\|J_i^j[u^{(\tau)}]\|_{L^2(0,T)}
  \|\phi_i\|_{L^2(0,T;H^1(\Omega))}
  + \eps\|w_i^{(\tau)}\|_{L^2(0,T;H^1(\Omega))}
  \|\phi_i\|_{L^2(0,T;H^1(\Omega))} \nonumber \\
  &\le C\|\phi_i\|_{L^2(0,T;H^1(\Omega))}.\nonumber
\end{align}
The last step follows from the boundedness of $J_i^j[u^{(\tau)}]$,
since $0\le u_i^{(\tau)}(x,t)\le 1$ for $x\in[0,1]$ and 
$0\le\Lambda_{n/s}(t)\le 1$. Inequality \eqref{2.time} holds for 
all piecewise constant functions $\phi_i:(0,T)\to H^1(\Omega)$. By a density argument, we obtain 
$$
  \tau^{-1}\|u_i^{(\tau)}-\sigma_\tau u_i^{(\tau)}\|_{L^2(0,T;H^1(\Omega)')}\le C,
$$
concluding the proof.
\end{proof}

\subsection{Limit $(\eps,\tau)\to 0$}

Lemmas \ref{lem.grad} and \ref{lem.time} allow us to apply the Aubin--Lions lemma in the version of \cite{DrJu12}, giving the existence of a subsequence, which is not relabeled, such that as $(\eps,\tau)\to 0$,
$$
  u_0^{(\tau)}\to u_0\quad\mbox{in }L^2(\Omega_T),
$$
and because of the uniform $L^\infty(\Omega_T)$ bound, this convergence holds in any $L^p(\Omega_T)$ for $p<\infty$. Moreover, we conclude the following weak convergences (up to subsequences):
\begin{align*}
  u_i^{(\tau)}\rightharpoonup u_i &\quad\mbox{weakly* in }
  L^\infty(\Omega_T), \\
  \tau^{-1}(u_i^{(\tau)}-\sigma_\tau u_i^{(\tau)}) \rightharpoonup
  \pa_t u_i &\quad\mbox{weakly in }L^2(0,T;H^1(\Omega)), \\
  \eps w_i^{(\tau)} \to 0 &\quad\mbox{strongly in }L^2(0,T;H^1(\Omega)).
\end{align*}
Since both $(u_i^{(\tau)})$ and $(\pa_x u_0^{(\tau)})$ are only weakly converging, we cannot obtain the convergence of the product. However,
the uniform bounds for $((u_0^{(\tau)})^{1/2}u_i^{(\tau)})$ and $((u_0^{(\tau)})^{1/2})$ in $L^2(0,T;H^1(\Omega))$ allow us to apply the ``degenerate'' version of the Aubin--Lions lemma \cite{BDPS10,Jue15} so that (for a subsequence)
$$
  (u_0^{(\tau)})^{1/2}u_i^{(\tau)}\to \sqrt{u_0}u_i 
  \quad\mbox{strongly in }L^p(\Omega_T),\ p<\infty
  \mbox{ as }(\eps,\tau)\to 0.
$$
This shows that
\begin{align*}
  u_0^{(\tau)}\pa_x u_i^{(\tau)} - u_i^{(\tau)}\pa_x u_0^{(\tau)}
  &= (u_0^{(\tau)})^{1/2}\pa_x\big((u_0^{(\tau)})^{1/2}u_i^{(\tau)}\big)
  - 3(u_0^{(\tau)})^{1/2}u_i^{(\tau)}\pa_x(u_0^{(\tau)})^{1/2} \\
  &\rightharpoonup \sqrt{u_0}\pa_x(\sqrt{u_0}u_i)
  - 3\sqrt{u_0}u_i\pa_x\sqrt{u_0}
\end{align*}
weakly in $L^1(\Omega_T)$, and since this sequence is bounded in $L^2(\Omega_T)$, the convergence holds true in that space. 

It follows from the linearity and continuity of the trace operator $H^1(\Omega)\to L^2(\pa\Omega)$ that this operator is weakly continuous and therefore,
$$
  u_0^{(\tau)}(x,\cdot)\to u_0(x,\cdot),\quad 
  (u_0^{(\tau)}u_i^{(\tau)})(x,\cdot)\rightharpoonup 
  (u_0u_i)(x,\cdot)\quad\mbox{weakly in }L^2(0,T),\ x=0,1.
$$ 
In fact, these sequences are even bounded in $L^\infty(0,T)$ because of the embedding $H^1(\Omega)\hookrightarrow C^0(\overline\Omega)\hookrightarrow L^\infty(\pa\Omega)$.
Let $\Lambda_{j}^{(\tau)}$ be the solution to \eqref{1.ode1} if $j=n$ or \eqref{1.ode2} if $j=s$ with $u$ replaced by $u^{(\tau)}$. Then  $\Lambda_{n}^{(\tau)}$ solves the integral equation
\begin{align*}
  \Lambda_n^{(\tau)} &= \Lambda_n(0) + \beta_1\int_0^t
  (1-\Lambda_n^{(\tau)}(r))
  \sigma_\tau(u_0^{(\tau)}u_1^{(\tau)})(1,r)dr 
  - \alpha_2\int_0^t\Lambda_n^{(\tau)}(r)\sigma_\tau u_0^{(\tau)}(1,r)dr.
\end{align*}
Since the integrand is uniformly bounded, this gives
$|\Lambda_n^{(\tau)}(t)-\Lambda_n^{(\tau)}(s)|\le C|t-s|$ for $s,t\in[0,T]$. Thus, $(\Lambda_n^{(\tau)})$ is uniformly bounded and uniformly equicontinuous. By the Arzel\`a--Ascoli theorem, there exists a subsequence (not relabeled) such that $\Lambda_n^{(\tau)}\to\Lambda_n$ uniformly in $[0,T]$. In a similar way, we prove that $\Lambda_s^{(\tau)}\to \Lambda_s$ uniformly in $[0,T]$. 
We need to identify the limits $\Lambda_n$ and $\Lambda_s$ as the solutions to \eqref{1.ode1} and \eqref{1.ode2}, respectively.

Set $G^{(\tau)}(t):=\Lambda_n^{(\tau)}(k\tau)$ for $t\in((k-1)\tau,k\tau]$. Then, for instance,
$$
  J_1^1[u^{(\tau)}](t) = \beta_1(1-G^{(\tau)}(t))u_0^{(\tau)}(1,t)u_1^{(\tau)}(1,t)
  \quad\mbox{for }t\in((k-1)\tau,k\tau].
$$
It holds for $s\in((m-1)\tau,m\tau]$ and $t\in((k-1)\tau,k\tau]$ that
$$
  |G^{(\tau)}(t)-G^{(\tau)}(s)|
  \le C|m\tau-k\tau|\le C(|t-s|+\tau).
$$
Therefore, since $G^{(\tau)}(\lceil t/\tau\rceil\tau)
= \Lambda_n^{(\tau)}(\lceil t/\tau\rceil\tau)$,
\begin{align*}
  |G^{(\tau)}(t)-\Lambda_n(t)|
  &\le |G^{(\tau)}(t)-G^{(\tau)}(\lceil t/\tau\rceil\tau)|
  + |\Lambda_n^{(\tau)}(\lceil t/\tau\rceil\tau)-\Lambda_n^{(\tau)}(t)|
  + |\Lambda_n^{(\tau)}(t)-\Lambda_n(t)| \\
  &\le C|t-\lceil t/\tau\rceil\tau| + C\tau 
  + \|\Lambda_n^{(\tau)}(t)-\Lambda_n(t)\|_{L^\infty(0,T)}\to 0
\end{align*}
as $(\eps,\tau)\to 0$, and this convergence is uniform in $[0,T]$. Hence, for instance,
$$
  J_1^1[u^{(\tau)}] \to \beta_1(1-\Lambda_n)u_0(1,\cdot)u_i(1,\cdot)
  =: J_1^1[u]\quad\mbox{strongly in }L^2(0,T).
$$
To establish that $\Lambda_n$ satisfies \eqref{1.ode1} it is sufficient to show that $\sigma_\tau(u_0^{(\tau)}u_i^{(\tau)})(x,\cdot)
\rightharpoonup (u_0u_1)(x,\cdot)$, $\sigma_\tau u_0^{(\tau)}(x,\cdot)\rightharpoonup u_0(x,\cdot)$ 
weakly in $L^2(0,T)$ for $x=0,1$. In fact, this result can be proved by straightforward arguments.
Then the convergence of $u_i^{(\tau)}(1,\cdot)$ in $L^2(0,T)$ implies that $\Lambda_n$ solves \eqref{1.ode1}. In a similar way, we prove that $\Lambda_s^{(\tau)}\to \Lambda_s$ uniformly in $[0,T]$, and $\Lambda_s$ solves \eqref{1.ode2}. 

The initial condition \eqref{1.ic}, understood in the sense of $H^1(\Omega)'$, follows from arguments similar as at the end of the proof of Theorem 2 in \cite{Jue15}. This finishes the proof.


\section{Numerical experiments and stationary states}\label{sec.num}

\subsection{Numerical scheme and parameters}

We discretize equations \eqref{1.eq1}--\eqref{1.eq2} by an implicit Euler finite-volume scheme. Let $n,m\in\N$ and set $\tau=T/n$, $h=1/m$. We divide $\Omega=(0,1)$ into $m$ cells $(x_{j},x_{j+1})$ for $j=0,\ldots,m-1$, where $x_j=jh$. (Note that the notation is different from Section \ref{sec.deriv}.) We approximate $h^{-1}\int_{x_j}^{x_{j+1}}u_i(x,k\tau)dx$ by $u_{i,j}^k$, which solves
for $k=1,\ldots,n$,
\begin{align*}
  u_{i,j}^k &= u_{i,j}^{k-1} 
  + \frac{\tau}{h}(J_{i,j+1/2}^k-J_{i,j-1/2}^k), 
  \quad i=1,2,\ j=1,\ldots,m-1, \\
  J_{i,j+1/2}^k &= -\frac{D_i}{h}\big(
  \bar{u}_{0,j+1/2}^k(u_{i,j+1}^k-u_{i,j}^k)
  + \bar{u}_{i,j+1/2}(u_{0,j+1}^k-u_{0,j}^k)\big) \\
  &\phantom{xx}- D_i\bar{u}_{0,j+1/2}^k\bar{u}_{i,j+1/2}^k
  \pa_x V_i(x_{j+1/2}),
\end{align*}
where $\bar{u}_{i,j+1/2}^k:=(u_{i,j+1}^k+u_{i,j}^k)/2$ for $i=0,1,2$.
At the boundary points $x=0$ and $x=1$, we replace $J_{i,1/2}^k$ and $J_{i,m-1/2}^k$ respectively, by the corresponding boundary condition, evaluated at $x_0=0$ or $x_m=1$ and at time $k\tau$. For instance,
$J_{1,0}^k = \alpha_1\Lambda_s(k\tau)u_{0,0}^k$. The differential equations \eqref{1.ode1}--\eqref{1.ode2} are discretized by the implicit Euler scheme, for instance,
$$
  \Lambda_s^k = \Lambda_s^{k-1} 
  - \tau\alpha_1\frac{\Lambda_s^k}{\Lambda_s^{\rm max}} u_{0,0}^k
  + \tau\beta_2\bigg(1-\frac{\Lambda_s^k}{\Lambda_s^{\rm max}}\bigg)
  u_{0,0}^k u_{2,0}^k.
$$

The nonlinear discrete system is solved by using a damped Newton method.
More precisely, let $F:\R^{3m+2}\to\R^{3m+2}$ be given by
\begin{align*}
  F_{j+m(i-1)}(y) &= u_{i,j}^{k-1} + \frac{\tau}{h}(J_{i,j+1/2}^k
  - J_{i,j-1/2}^k) - y_{j+m(i-1)}, \quad i=1,2, \\
  F_{j+2m}(y) &= y_{j+2m} - y_{j+m} - y_j, \\
  F_{j+2m+2}(y) &= \Lambda_s^{k-1} - \tau\alpha_1
  \frac{y_{3m+2}}{\Lambda_s^{\rm max}}(1-y_{2m+1}) \\
  &\phantom{xx}
  + \tau\beta_2\bigg(1-\frac{y_{3m+2}}{\Lambda_s^{\rm max}}\bigg)
  (1-y_{2m+1})y_{m+1} - y_{3m+2},
\end{align*}
where $y=(y_1,\ldots,y_{3m+2})\in\R^{3m+2}$
and $F_{3m+1}(y)$ is defined similarly from the implicit Euler scheme for $\Lambda_n^k$. The damped Newton method reads as
$$
  y^{(r+1)} = y^{(r)} + \frac{1}{(r+1)^{3/4}}
  \frac{\widehat{y}^{(r+1)}}{\|\widehat{y}^{(r+1)}\|_\infty},
  \quad r\in\N,
$$
where $\widehat{y}^{(r+1)}$ solves $F'(y^{(r)})(\widehat{y}^{(r+1)}-
y^{(r)}) = -F(y^{(r)})$. The exponent $3/4$ was determined from numerical experiments. We stopped the Newton iterations when 
$\|F(y^{(r)})\|_\infty<\eps$ with $\eps=10^{-3}$ is reached.
The numerical scheme is implemented in Python version 3.7.1.
We collect the values of the parameters, inspired from 
\cite{HDPP21}, in Table \ref{table}. If not otherwise stated, we set $h=0.0025$ and $\tau=10^{-4}$. 

\begin{table}[ht]
  \begin{tabular}{|l|l||l|l||l|l|}
  \hline
  $\alpha_1$ & 0.2666 & $\Lambda_n^{\rm{max}}$ & 0.0029 &
  $D_1$ & 0.0004 \\
  $\alpha_2$ & 0.2666 & $\Lambda_n^0$ & 0.0015 &
  $D_2$ & 0.004 \\
  $\beta_1$ & 3 & $\Lambda_s^{\rm{max}}$ & 0.175 &
  $V_1(x)$ & $1.75x$ \\
  $\beta_2$ & 3 & $\Lambda_s^0$ & 0.12 &
  $V_2(x)$ & $-1.5x$ \\ \hline
  \end{tabular}
  \caption{Numerical parameters.}
  \label{table}
\end{table}

\subsection{Numerical experiment 1}

We choose the initial data $u_1^0=u_2^0=0.1$. Figure \ref{fig1} presents the vesicle concentrations at times $t=0,1,10$ and the evolution of the number $\Lambda_n(t)$ of vesicles in the growth cone. The anterograde vesicles (species 1) are leaving the soma, leading to an increase of the concentration near $x=0$, while it is decreasing near the tip of the neurite at $x=1$ because of the small value of $\Lambda_s$. The retrograde vesicles (species 2) are leaving the growth cone at $x=1$, leading to an increase of the concentration, while it is decreasing near the soma. The number $\Lambda_s$ is decreasing over time, which can be explained by the difference of magnitude of the parameters $\alpha_1$ and $\beta_2$ governing the outflow rate. 

The behavior of the vesicles at $t=10$ in our model and the model of \cite{HDPP21} is similar; see the middle row of Figure \ref{fig1}. The difference is largest near the growth cone at $x=1$ (see the bottom left panel), which comes from the different boundary conditions at this point. Since the boundary value $J_1^1[u]$ contains the factor $u_0<1$ in our model, the number $\Lambda_n$ is decreasing at a faster rate compared to the model of \cite{HDPP21} (see the bottom right panel). 

\begin{figure}[ht]
\begin{center}
\hspace*{45mm} $t=0$ \hfill $t=1$ \hspace*{35mm}
\end{center}
\includegraphics[width=70mm, height=50mm]{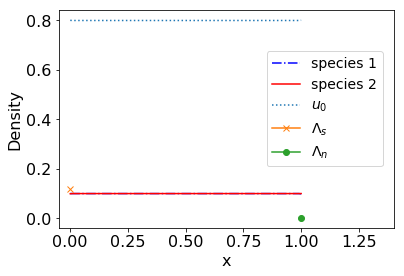} 
\includegraphics[width=70mm, height=50mm]{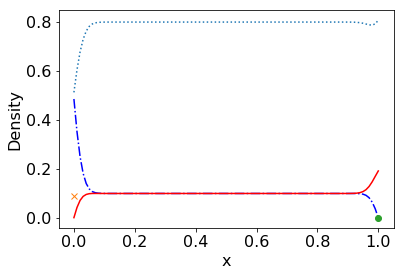} 
\begin{center}
\hspace*{45mm} $t=10$ \hfill $t=10$, model of \cite{HDPP21}\hspace*{20mm}
\end{center}
\includegraphics[width=70mm, height=50mm]{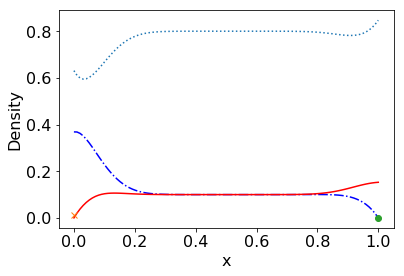} 
\includegraphics[width=70mm, height=50mm]{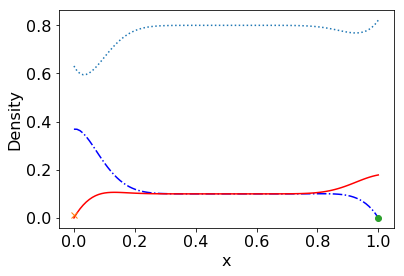} 
\includegraphics[width=70mm, height=50mm]{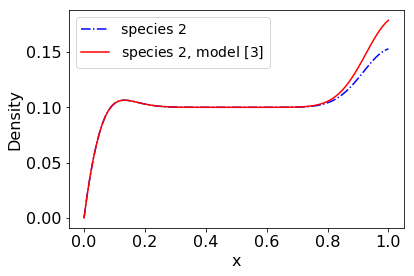}
\includegraphics[width=70mm, height=50mm]{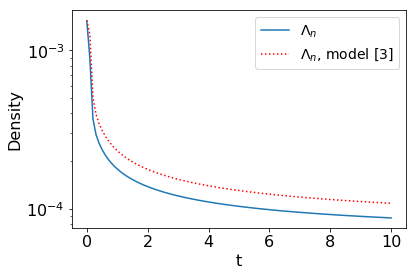}
\caption{Experiment 1: Concentrations of anterograde vesicles (species 1) and retrograde vesicles (species 2). Top row: $t=0,1$. Middle row: $t=10$. Bottom left: $t=10$, only species 2. Bottom right: Evolution of $\Lambda_n(t)$.}
\label{fig1}
\end{figure}

\subsection{Numerical experiment 2}

In this example, we choose piecewise constant initial data:
$$
  u_1^0(x) = \left\{\begin{array}{ll}
  0.9 & \quad\mbox{for }0.1<x<0.4, \\
  0 & \quad\mbox{else},
  \end{array}\right. \quad
  u_2^0(x) = \left\{\begin{array}{ll}
    0.9 & \quad\mbox{for }0.6<x<0.9, \\
    0 & \quad\mbox{else},
    \end{array}\right.
$$
The numerical results at times $t=0,1,10,100$ are shown in Figure \ref{fig2}. We observe a smoothing effect (due to diffusion) and a drift of the vesicles profiles towards the middle. 
The drift of the anterograde vesicles is stronger compared to the retrograde vesicles because of $|\pa_x V_1|>|\pa_x V_2|$. Since the boundary values of the vesicles are very small, the results of our model are almost identical to those from the model of \cite{HDPP21}; see Figure \ref{fig2} bottom for $\Lambda_n$ and $\Lambda_s$ up to $t=10$.

\begin{figure}[ht]
\begin{center}
\hspace*{45mm} $t=0$ \hfill $t=1$ \hspace*{35mm}
\end{center}
\includegraphics[width=70mm, height=50mm]{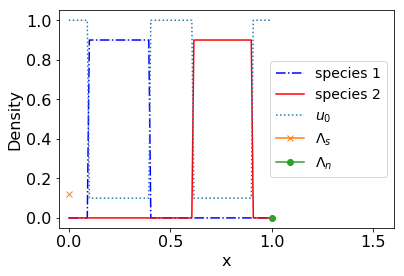}
\includegraphics[width=70mm, height=50mm]{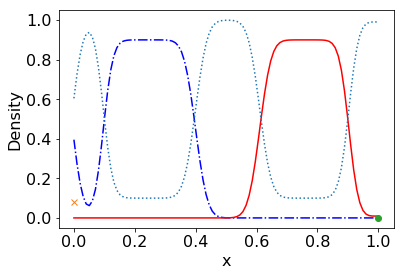} 
\begin{center}
\hspace*{45mm} $t=10$ \hfill $t=100$ \hspace*{35mm}
\end{center}
\includegraphics[width=70mm, height=50mm]{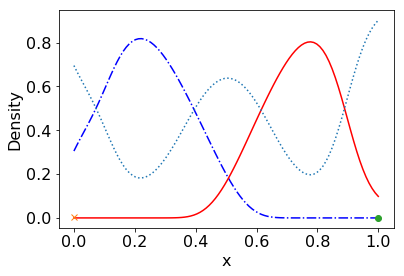}
\includegraphics[width=70mm, height=50mm]{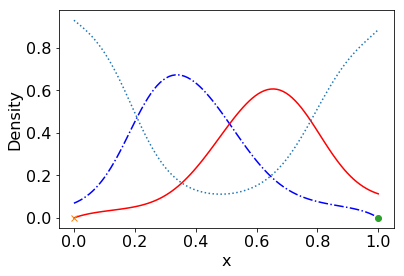} 
\includegraphics[width=70mm, height=50mm]{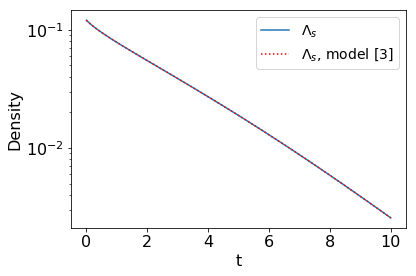} 
\includegraphics[width=70mm, height=50mm]{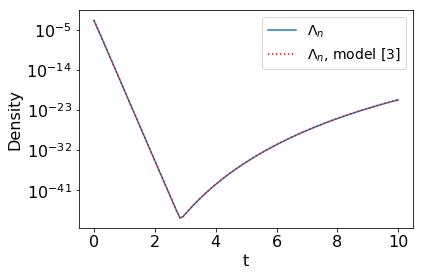} 
\caption{Experiment 2: Concentrations of anterograde vesicles (species 1) and retrograde vesicles (species 2). Top and middle rows: $t=0,1,10,100$. Bottom row: evolution of $\Lambda_s$ (left) and $\Lambda_n$ (right).}
\label{fig2}
\end{figure}

\subsection{Convergence rates}

We test our numerical scheme by computing the spatial and temporal convergence rates. We choose the initial data $u_1^0=u_2^0=0.1$ and the parameters from Table \ref{table}. Furthermore, we set $T=1$. We define the mean error as the discrete $L^2$ norm 
$\|u-u^{\rm ref}\|_2/\sqrt{2(m+1)}$, where
$u=(u_1,u_2,\Lambda_n,\Lambda_s)$ and $u^{\rm ref}=
(u_1^{\rm ref},u_2^{\rm ref},\Lambda_n^{\rm ref},\Lambda_s^{\rm ref})$ is the reference solution. 

Figure \ref{fig.conv} (left) shows the discrete $L^2$ error for time step sizes $\tau=10^{-2}\cdot 2^{-k}$ for $k=1,\ldots,7$ with fixed $h=10^{-3}$. The reference solution is computed with $h=10^{-3}$ and $\tau=10^{-5}$. The convergence is of first order for rather large values of $\tau$, while it is between first and second order when the time step size is closer to the step size of the reference solution.
The spatial convergence is illustrated in Figure \ref{fig.conv} (right) for grid sizes $h=10^{-2}\cdot 2^{-k}$ for $k=1,\ldots,7$ with fixed $\tau=10^{-3}$. The reference solution is calculated by using the parameters $h=10^{-5}$ and $\tau=10^{-3}$. The convergence is of first order (if $\tau$ is not too large), which is expected for the two-point approximation finite-volume scheme. 

\begin{figure}[ht]
\includegraphics[width=75mm, height=55mm]{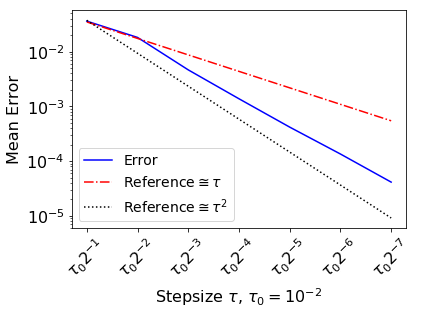} 
\includegraphics[width=75mm, height=55mm]{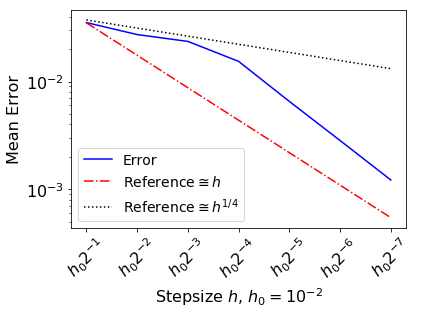} 
\caption{Left: Discrete $L^2$ error versus time step size $\tau$ for fixed $h=10^{-3}$. Right: Discrete $L^2$ error versus space step size $h$ for fixed $\tau=10^{-3}$.}
\label{fig.conv}
\end{figure}


\subsection{Stationary states}\label{sec.steady}

In this section, we derive some properties of stationary solutions, i.e., solutions $(u_1,u_2,$ $\Lambda_n,\Lambda_s)$ to \eqref{1.eq1}--\eqref{1.ode2}, where $\pa_t u_1=\pa_t u_2=0$ and $\pa_t\Lambda_n=\pa_t\Lambda_s=0$. The former condition implies that the fluxes $J_1$ and $J_2$ are constant, and we deduce from the latter condition that the total flux vanishes, $J_1+J_2=0$. Consequently, $J:=J_1=-J_2$. Moreover, if $u_0(1)>0$ and $u_0(0)>0$, the stationary solution to \eqref{1.ode1}--\eqref{1.ode2} is given by
\begin{equation}\label{4.lamb}
  \Lambda_n = \frac{\beta_1 u_1(1)}{\beta_1 u_1(1)+\alpha_2}, \quad \Lambda_s = \frac{\beta_2 u_2(0)}{\beta_2 u_2(0)+\alpha_1},
\end{equation}
We assume that a stationary solution exists and that $u_1,u_2\in W^{1,\infty}(\Omega)$. Then
\begin{equation}\label{4.J}
  J = -D_1\big(u_0\pa_x u_1-u_1\pa_x u_0-u_0u_1\pa_x V_1\big) = D_2\big(u_0\pa_x u_2-u_2\pa_x u_0-u_0u_2\pa_x V_2\big).
\end{equation}

The following situation is approximately satisfied in numerical experiment 1 for large times.

\begin{lemma}\label{lem1}
Let $u_0(1)>0$ and $u_0(0)>0$. Then $\Lambda_n=0$ if and only $\Lambda_s=0$, and $u_1(1)=0$ if and only of $u_2(0)=0$. 
In this situation, the flux vanishes, $J=0$. 
\end{lemma}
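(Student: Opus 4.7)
The plan is to reduce everything to the boundary conditions \eqref{1.bc10}--\eqref{1.bc21} combined with the explicit steady-state formula \eqref{4.lamb} and the consequences of stationarity on the fluxes. First I would record the two easy corollaries of \eqref{4.lamb}: since $\alpha_i,\beta_i>0$ and $u_i\ge 0$, the denominators in \eqref{4.lamb} are strictly positive, so $\Lambda_n=0\iff u_1(1)=0$ and $\Lambda_s=0\iff u_2(0)=0$. This immediately reduces the ``$u_1(1)=0\iff u_2(0)=0$'' part of the lemma to the ``$\Lambda_n=0\iff\Lambda_s=0$'' part by chaining the equivalences.

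Next I would use stationarity to fix a single flux value at both endpoints. Since $\pa_x J_i=0$, each $J_i$ is spatially constant, and $\pa_t(\Lambda_n+\Lambda_s)=0$ (or equivalently conservation of mass in $\Omega$) forces $J_1+J_2=0$, so we may set $J:=J_1=-J_2$ and this single number must equal the boundary expressions in \eqref{1.bc10}--\eqref{1.bc21} at both $x=0$ and $x=1$.

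The core step is then an algebraic chase. Assume $\Lambda_n=0$; then $u_1(1)=0$ by \eqref{4.lamb}, so \eqref{1.bc11} yields $J=J_1(1)=\beta_1(1-\Lambda_n)u_0(1)u_1(1)=0$. Substituting this into \eqref{1.bc10} gives $\alpha_1\Lambda_s u_0(0)=0$, and the hypothesis $u_0(0)>0$ together with $\alpha_1>0$ forces $\Lambda_s=0$. The reverse implication is entirely symmetric, starting from $\Lambda_s=0\Rightarrow u_2(0)=0$ via \eqref{4.lamb}, then using \eqref{1.bc20} to obtain $J=-J_2(0)=0$, and finally reading off $\Lambda_n=0$ from \eqref{1.bc21} together with $u_0(1)>0$. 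The identity $J=0$ drops out as a byproduct of either direction.

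There is no genuine analytic obstacle; the argument is a short algebraic deduction. The only point requiring attention is that each of the two implications uses the strict positivity of $u_0$ at the corresponding endpoint in order to cancel a factor, which is precisely why both hypotheses $u_0(0)>0$ and $u_0(1)>0$ appear in the statement.
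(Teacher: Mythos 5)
Your proposal is correct and follows essentially the same route as the paper: both arguments rest on the equivalences $\Lambda_n=0\Leftrightarrow u_1(1)=0$ and $\Lambda_s=0\Leftrightarrow u_2(0)=0$ from \eqref{4.lamb}, the spatial constancy of $J=J_1=-J_2$, and the strict positivity of $u_0$ at the endpoints to cancel the remaining factor in the boundary conditions \eqref{1.bc10}--\eqref{1.bc21}. The only cosmetic difference is that the paper substitutes \eqref{4.lamb} into the boundary conditions to form the single chain \eqref{4.aux} and leaves the converse implication to symmetry, whereas you work with the boundary conditions directly and spell out both directions.
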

  
\begin{proof}
Let $\Lambda_n=0$. Then, by \eqref{4.lamb}, $u_1(1)=0$. We insert expressions \eqref{4.lamb} into the boundary conditions \eqref{1.bc10}--\eqref{1.bc11}:
\begin{equation}\label{4.aux}
  J = J_1(0) = \frac{\alpha_1\beta_2 u_2(0)}{\beta_2 u_2(0) + \alpha_1}u_0(0) = J_1(1) = \frac{\alpha_2\beta_1 u_1(1)}{\beta_1 u_1(1)+\alpha_2}u_0(1) = 0.
\end{equation}
This shows that $u_2(0)=0$ and consequently, again by \eqref{4.lamb}, $\Lambda_s=0$. Moreover, we infer from \eqref{4.aux} that $J=0$. 
\end{proof}

If the parameters are the same for both species, the solution is symmetric around $x=1/2$, as proved in the following lemma.

\begin{lemma}\label{lem2}
Let $\alpha_1=\alpha_2$, $\beta_1=\beta_2$, $\Lambda_n^{\rm max}=\Lambda_s^{\rm max}$, $D_1=D_2$, and $V_2(x)=V_1(1-x)+\text{const.}$ for $x\in\Omega$. Then $(u_1,u_2,\Lambda_n,\Lambda_s)$ with
$u_2(x)=u_1(1-x)$ for $x\in\Omega$ and  $\Lambda_n=\Lambda_s$ is a stationary solution to \eqref{1.eq1}--\eqref{1.bc21}. 
\end{lemma}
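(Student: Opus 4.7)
The plan is to verify the symmetric ansatz by direct substitution, exploiting the invariance of the stationary system under the reflection $x\mapsto 1-x$ combined with a swap of species. First I would introduce the change of variable $y=1-x$ and note that the hypotheses $u_2(x)=u_1(1-x)$ and $V_2(x)=V_1(1-x)+\text{const.}$ immediately yield $\pa_x V_2(x) = -V_1'(y)$ and $\pa_x u_2(x) = -u_1'(y)$, as well as, via $u_0=1-u_1-u_2$, the symmetry $u_0(x)=u_0(y)$ and $\pa_x u_0(x)=-u_0'(y)$.

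Next, a short calculation substituting these identities into the flux formula \eqref{4.J} for $J_2$ would yield
\begin{equation*}
  J_2(x) = D_2\bigl(u_0(y) u_1'(y) - u_1(y) u_0'(y) - u_0(y) u_1(y) V_1'(y)\bigr) = -J_1(y),
\end{equation*}
where in the last step I use $D_1=D_2$. Since $J_1$ is constant in $x$ at a stationary state, so is $J_2$, and the total flux satisfies $J_1+J_2\equiv 0$ on $\Omega$. In particular the two conservation laws $\pa_x J_i = 0$ for $i=1,2$ collapse to a single equation for $u_1$.

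Having handled the PDE part, I would turn to the four Robin boundary conditions \eqref{1.bc10}--\eqref{1.bc21}. Using $u_0(0)=u_0(1)$, $u_2(0)=u_1(1)$, $\Lambda_n=\Lambda_s$, together with $\alpha_1=\alpha_2$ and $\beta_1=\beta_2$, one checks that the boundary condition for $J_1$ at $x=0$ coincides with the condition for $J_2$ at $x=1$ (both give $J = \alpha_1\Lambda_n u_0(0)$), while the condition for $J_1$ at $x=1$ coincides with the condition for $J_2$ at $x=0$ (both give $J = \beta_1(1-\Lambda_n) u_0(0) u_1(1)$). The stationary ODEs $\pa_t\Lambda_n = J_1^1+J_2^1 = 0$ and $\pa_t\Lambda_s = -(J_1^0+J_2^0) = 0$ are then automatic from $J_1+J_2\equiv 0$, and the two expressions in \eqref{4.lamb} for $\Lambda_n$ and $\Lambda_s$ coincide once $u_2(0)=u_1(1)$ and all parameters agree, confirming $\Lambda_n=\Lambda_s$.

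The main obstacle is essentially bookkeeping: tracking signs correctly in the flux computation under the reflection, and making sure the four boundary conditions really collapse pairwise rather than imposing new constraints. No new analytical difficulty arises beyond this symmetry verification; in particular, existence of the underlying reduced profile $u_1$ is tacit in the premise that a stationary solution is being considered.
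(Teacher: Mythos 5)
Your proposal is correct and follows essentially the same route as the paper: verify the symmetric ansatz by substituting the reflection $x\mapsto 1-x$ into the stationary flux identity \eqref{4.J} (using $u_0(x)=u_0(1-x)$, $\pa_x u_2(x)=-\pa_x u_1(1-x)$, $\pa_x V_2(x)=-\pa_x V_1(1-x)$) and then check that the boundary conditions and \eqref{4.lamb} collapse pairwise under $u_2(0)=u_1(1)$, $u_0(0)=u_0(1)$, and the parameter symmetries. Your explicit pairwise matching of the four Robin conditions and the remark that existence of the reduced profile $u_1$ is presupposed are slightly more detailed than the paper's proof but substantively identical.
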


\begin{proof}
Let $u_1$ be a solution to \eqref{4.J} with $u_0:=1-u_{1}(x)-u_1(1-x)$
and $u_2(x):=u_1(1-x)$ for $x\in\Omega$. Taking into account that $\pa_x u_2(x) = -\pa_x u_1(1-x)$ and $\pa_x V_2(x) = -\pa_x V_1(1-x)$, we deduce from $u_0(x)=u_0(1-x)$ that
\begin{align*}
  -J/D_1 &= u_0(x)\pa_x u_1(x) - u_1(x)\pa_x u_0(x) 
  - u_0(x)u_1(x)\pa_x V_1(x) \\
  &= -u_0(1-x)\pa_x u_2(1-x) + u_2(1-x)\pa_x u_0(1-x) \\
  &\phantom{xx}+ u_0(1-x)u_2(1-x) \pa_x V_2(1-x).
\end{align*}
Thus, $(u_1,u_2)$ solves \eqref{4.J}. We infer from $u_1(1)=u_2(0)$ and \eqref{4.lamb} that $\Lambda_n=\Lambda_s$. Furthermore, since $u_0(0)=u_0(1)$, the boundary conditions \eqref{1.bc10}--\eqref{1.bc21} are satisfied.
\end{proof}

This situation is illustrated in Figure \ref{fig.steady}. 
We have chosen $\Lambda_{n}^{\max}=\Lambda_{s}^{\max}=0.175$, $\Lambda_{n}^0=\Lambda_{s}^0=0.12$, with potentials $V_{1}(x)=1.5x$, $V_{2}(x)=-1.5x$, and initial data $u_{1}^0=u_{2}^0=0.1$. The left panel shows the concentrations at $T=1000$ using the parameters $\alpha_i$, $\beta_i$, and $D_i$ as in Experiment 1. The solution is approximately stationary (the modulus of the flux is less than 0.01). 
Since $u_2(0)=0$, Lemma \ref{lem1} shows that the stationary flux vanishes. In the right panel, we present a case where the stationary flux does not vanish. Here, the solution is computed up to $T=100$, the parameters are $\alpha_i=\beta_i=D_i=1$ for $i=1,2$, and the flux equals $J=0.118$. 

\begin{figure}[ht]
\includegraphics[width=75mm, height=55mm]{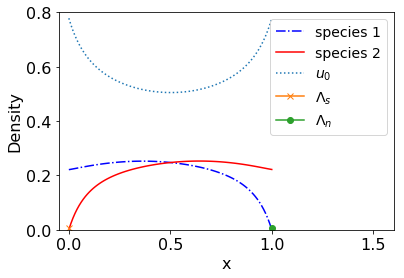} 
\includegraphics[width=75mm, height=55mm]{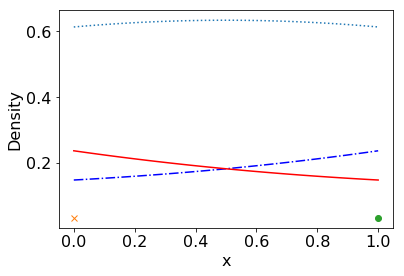} 
\caption{Concentrations of anterograde and retrograde vesicles.
Left: $J=0$. Right: $J\neq 0$.}
\label{fig.steady}
\end{figure}




\end{document}